 \newtheorem{thm}{Theorem}[section]
 \newtheorem{lem}[thm]{Lemma}
 \newtheorem{prop}[thm]{Proposition}
 \theoremstyle{definition}
 \theoremstyle{remark}
 \numberwithin{equation}{section}
\newcommand{\ddwad}{\Delta_{2D}}
\newcommand{\ddwadd}{\overline{\ddwad}}
\newcommand{\notzee}{C_{\not z}(\Omega)}
\begin{document}

			
\title[Semigroups and thin layers]{Semigroup-theoretic approach to diffusion in thin layers separated by semi-permeable membranes}


\iftoggle{blind}{\author{ }}{
\author[A. Bobrowski]{Adam Bobrowski}
\address{
Lublin University of Technology\\
Nadbystrzycka 38A\\
20-618 Lublin, Poland}

\email{a.bobrowski@pollub.pl}}





\newcommand{\cxi}{(\xi_i)_{i\in \N} }
\newcommand{\lam}{\lambda}
\newcommand{\eps}{\varepsilon}
\newcommand{\ud}{\, \mathrm{d}}
\newcommand{\udd}{\mathrm{d}}
\newcommand{\pr}{\mathbb{P}}
\newcommand{\f}{\mathcal{F}}
\newcommand{\h}{\mathcal{H}}
\newcommand{\ai}{\mathcal{I}}
\newcommand{\R}{\mathbb{R}}
\newcommand{\C}{\mathbb{C}}
\newcommand{\Z}{\mathbb{Z}}
\newcommand{\N}{\mathbb{N}}
\newcommand{\Y}{\mathbb{Y}}
\newcommand{\e}{\mathrm {e}}
\newcommand{\tif}{\widetilde {f}}
\newcommand{\slam}{\sqrt {\lam}}
\newcommand{\Id}{{\mathrm{Id}}}
\newcommand{\cic}{C_{\mathrm{mp}}}
\newcommand{\cod}{C_{\mathrm{odd}}[0,1]}
\newcommand{\cev}{C_{\mathrm{even}}[0,1]}
\newcommand{\cevr}{C_{\mathrm{even}}(\mathbb{R})}
\newcommand{\codr}{C_{\mathrm{odd}}(\mathbb{R})}
\newcommand{\cez}{C_0(0,1]}
\newcommand{\fod}{f_{\mathrm{odd}}} 
\newcommand{\fev}{f_{\mathrm{even}}} 
\newcommand{\sem}[1]{\mbox{$\left (\e^{t{#1}}\right )_{t \ge 0}$}}
\newcommand{\semi}[1]{\mbox{$\left ({#1}\right )_{t > 0}$}}
\newcommand{\semt}[2]{\mbox{$\left (\e^{t{#1}} \otimes_\varepsilon \e^{t{#2}} \right )_{t \ge 0}$}}
\newcommand{\tr}{\textcolor{red}}
\newcommand{\cea}{C_A}
\newcommand{\ceat}{C_A(t)}
\newcommand{\cosinea}{(\ceat )_{t\in \R}}  
\newcommand{\sea}{S_A}
\newcommand{\seat}{S_A(t)}
\newcommand{\sema}{(\seat )_{t\ge 0}}
\newcommand{\wt}{\widetilde}
\renewcommand{\iff}{if and only if }
\renewcommand{\k}{\mathrm{k}}
\newcommand{\tcm}{\textcolor{magenta}}
\newcommand{\tcb}{\textcolor{blue}}
\newcommand{\dx}{\ \textrm {d} x}
\newcommand{\dy}{\ \textrm {d} y}
\newcommand{\dz}{\ \textrm {d} z}
\newcommand{\di}{\textrm{d}}
\newcommand{\tcg}{\textcolor{green}}
\newcommand{\lc}{\mathfrak L_c}
\newcommand{\ls}{\mathfrak L_s}
\newcommand{\grat}{\lim_{t\to \infty}}
\newcommand{\gra}{\lim_{n\to \infty}}
\newcommand{\grae}{\lim_{\eps \to 0}}
\newcommand{\rez}[1]{\left (\lam - #1\right)^{-1}}
\newcommand{\papa}{\hfill $\square$}
\newcommand{\papap}{\end{proof}}
\newcommand {\x}{\mathbb{X}}
\newcommand{\aex}{A_{\mathrm ex}}
\newcommand{\jcg}[1]{\left ( #1 \right )_{n\ge 1} }
\newcommand {\y}{\mathbb{Y}}
\newcommand{\injtp}{\x \hat \otimes_{\varepsilon} \y}
\newcommand{\pin}{\|_{\varepsilon}}
\newcommand{\mc}{\mathcal}
\newcommand{\inter}{\left [0, 1\right ]}
\newcommand{\lir}{\lim_{r \to 1}}
\newcommand{\ha}{\mathfrak {H}}
\newcommand{\dom}[1]{D(#1)}
\newcommand{\mquad}[1]{\quad\text{#1}\quad}
\makeatletter
\newcommand{\normt}{\@ifstar\@normts\@normt}
\newcommand{\@normts}[1]{%
  \left|\mkern-1.5mu\left|\mkern-1.5mu\left|
   #1
  \right|\mkern-1.5mu\right|\mkern-1.5mu\right|
}
\newcommand{\@normt}[2][]{%
  \mathopen{#1|\mkern-1.5mu#1|\mkern-1.5mu#1|}
  #2
  \mathclose{#1|\mkern-1.5mu#1|\mkern-1.5mu#1|}
}
\makeatother

\thanks{Version of \today}
\subjclass{ 35K57,47D06, 35B25, 35K58}
\keywords{semigroups of operators, semilinear equations, irregular convergence, singular perturbations, boundary and transmission conditions, thin layers}

 \iftoggle{blind}{
   \begin{titlepage}
   \thispagestyle{empty}
   
   \noindent {\Large\textbf{Semigroup-theoretic approach to diffusion in thin
       layers separated by semi-permeable membranes}}

   \vspace{0.5cm}

   \noindent Adam Bobrowski (\texttt{a.bobrowski@pollub.pl})

   \vspace{0.5cm}
   \noindent Lublin University of Technology\\
   Nadbystrzycka 38A\\
   20-618 Lublin, Poland

   \vspace{0.5cm}

   \noindent \textbf {Acknowledgment.}
   This research is supported by National Science Center (Poland) grant
   2017/25/B/ST1/01804.
   \end{titlepage}
}

\begin{abstract}Using techniques of the theory of semigroups of linear operators we study the question of approximating solutions to equations governing diffusion in thin layers separated by a semi-permeable membrane. We show that as thickness of the layers converges to $0$, the solutions, which by nature are functions of $3$ variables, gradually lose dependence on the vertical variable and thus may be regarded as functions of $2$ variables. The limit equation describes diffusion on 
the lower and upper sides of a two-dimensional surface (the membrane) with jumps from one side to the other. The latter possibility  
is expressed as an additional term in the generator of the limit semigroup, and this term is build from permeability coefficients of the membrane featuring in the transmission conditions of the approximating equations (i.e., in the description of the domains of the generators of the approximating semigroups). We prove this convergence result in the spaces of square integrable and continuous functions, and study the way the choice of transmission conditions influences the limit.

\end{abstract}

\iftoggle{blind}{\thispagestyle{firstpage}} %

\maketitle

\newcommand{\oper}{\mathfrak R_r}
\newcommand{\opern}{\mathfrak R_{\rn}^\mho}
\newcommand{\brn}{\mbox{$\Delta^\mho_{\rn}$}}
\newcommand{\bro}{\mbox{$\Delta_{\rn}$}}
\newcommand{\rn}{r}
\newcommand{\cern}{C\hspace{-0.07cm}\left[\rn, 1\right ]}
\newcommand{\cernbez}{C\left[\rn, 1\right ]}
\newcommand{\cep}{C\hspace{-0.07cm}\left[ 0, 1\right ]}
\newcommand{\copi}{C[0,\pi]}
\newcommand{\cerec}{C\hspace{-0.07cm} \left ([0,\pi]\times [r,1]\right)}
\newcommand{\cerecbez}{C \left ([0,\pi]\times [r,1]\right)}
\newcommand{\cerecdwa}{C^2\hspace{-0.07cm} \left ([0,\pi]\times [r,1]\right)}
\newcommand{\cerecj}{C\hspace{-0.07cm} \left ([0,\pi]\times \left [0 ,1\right ]\right)}
\newcommand{\xprim}{C_\theta (UR)}
\newcommand{\ie}{i.e., }
\newcommand{\rla}{R_\lambda}
\newcommand{\grubex}{\mathbb X}
\newcommand{\Jcg}[1]{\left ( #1 \right )_{i=1,...,N}} 

\section{Introduction}\label{intro}

The paper is devoted to a semigroup-theoretical approach to the problem of approximating solutions to an equation modeling diffusion in two thin 3D layers separated by a semi-permeable membrane: particles diffusing in the upper layer may, via a stochastic mechanism, filter through the membrane to the lower layer and continue  their chaotic movement there, and vice versa. To this end,  the reaction-diffusion equation
\begin{equation}\label{intro:1} \partial_t u = \Delta_{3D} u + F(u) \end{equation}
where $\Delta_{3D}$ is a $3D$ Laplace operator, and $F$ is a Lipschitz continuous forcing (reaction) term, considered in two layers of thickness $\eps $, is equipped with boundary and transmission conditions (see \eqref{eq.robin2} and \eqref{C:eq.robin2}, further down) describing in particular the way the membrane works, and appropriate generation theorems are proved in the spaces of square integrable and continuous functions, respectively (thus establishing existence and uniqueness of mild solutions of the equation). Next, we show that, as $\eps \to 0$, the approximating processes resemble more and more 2D Brownian motions on the upper and lower sides of the membrane. Remarkably, the limit process allows also jumps from one side to the other: this possibility is the limit equivalent of the mechanism of filtering through the membrane in the approximating process. More specifically, as $\eps \to 0$ and as looked upon through a magnifying glass (see below), solutions of \eqref{intro:1} become more and more flat in the vertical direction (but still differ in the lower and upper layers) and thus may be identified with pairs $(u^-,u^+)$ of functions of two variables, defined on the lower and upper sides of the membrane. The limit dynamics is then governed by the following system: 
\begin{align}
\partial_t \binom {u^-}{u^+}  &= \left [ \begin{pmatrix} \Delta_{2D}  & 0 \\ 0 & \Delta_{2D}\end{pmatrix}+ \begin{pmatrix} - \alpha & \alpha \\ \beta & - \beta \end{pmatrix}\right ] \binom {u^-}{u^+}  + \binom{F(u^-)- c^-u^-}{F(u^+)-c^+u^+  }\label{intro:2} 
\end{align}
where $\Delta_{2D}$ is a 2D Laplace operator. More interestingly, $\alpha $ and $\beta$ are functions describing permeability of the membrane in the approximating processes. Thus, our theorem says that in the limit, transmission conditions governing the approximating processes become integral parts of the master equation. Functions $c^-$ and $c^+$ play a similar role: they come from the Robin boundary conditions in the approximating processes, describing partial loss of particles touching lower and upper boundaries of the layers.    It is worth noting that processes described by \eqref{intro:2} are closely related to piecewise deterministic Markov processes of M.H.A Davis \cites{davis,davisk,davisk2,rudnickityran}, random evolutions of R.J. Griego and R. Hersh \cites{ethier,gh1,gh2,pinskyrandom} and to randomly switching diffusions \cites{dwoje,ilin,yin}; for a semigroup theoretic context see  \cites{convex,knigaz}.

We prove two variants of generation and approximation theorems. In Section \ref{ldwa}, devoted to analysis in $L^2$, permeability coefficients are bounded, measurable functions on the membrane, and thus permeability may vary from region to region. In Section \ref{aic} where we construct Feller semigroups,  we restrict ourselves to constant permeability coefficients but, on the other hand, show that our approximation theorem is robust to changing the mechanism of filtering through the membrane: its thesis remains (almost) the same even if more general transmission conditions than those considered in Section \ref{ldwa} are considered.

It should be noted here that, beginning with the seminal paper \cite{hale}, considerable attention has already been given to thin layer approximation, both in the mathematical and in the physical literature; see e.g. \cites{arrieta,barros,raugel,prizziryba,prizziryba2,prizzi,elsken} for the former and \cite{carlsson} for an example of the latter. Our paper, however, differs from the previous works in several aspects. First of all, we look at the problem of thin layer approximation from the perspective of convergence of semigroups of linear operators, and use the tools of this rich theory (see e.g., \cite{knigaz}). Secondly, in Section \ref{aic}, we show that the theory may be successfully applied to Feller semigroups acting in the spaces of continuous functions. By contrast, a usual machinery used in the literature is the method of forms, and the analysis is usually carried out in the space of square integrable functions. This change of perspective is important for at least three reasons: (i) Convergence of Feller semigroups is, by the Trotter--Sova--Kurtz--Mackevi\v cius Theorem (see e.g. \cite{kallenbergnew} p. 385), equivalent to weak convergence of the Markov processes involved, whereas a stochastic interpretation of a similar convergence result in a Hilbert space is rather unclear.  (ii) The analysis of the thin layer approximation hinges on a stretching transformation of `thin coordinates' (see our Section \ref{avtamg}, compare p. 111 in \cite{prizzi} or p. 583 in \cite{pazanin}). This transformation is an isometric isomorphism of appropriate spaces of continuous functions, but not of the related Hilbert spaces. (iii) Uniform convergence, i.e., convergence in the norm of the space of continuous functions, is stronger than that in the norm of $L^2$ (since the region we consider is bounded). 

On the other hand, as exemplified also by the results of our Section \ref{ldwa}, analytic tools available in $L^2$ are more flexible and thus allow treating potentially more general geometries and more general boundary conditions. Perhaps stochastic analysis, and the results presented in \cite{kurtzcontrol} and \cite{kurtzcostantini} in particular, may lead to generalizations that are also meaningful for stochastic processes.

The last and probably most significant difference between this paper and the existing literature is that, while our theorem is focused on the intriguing fact that boundary and transmission conditions in the limit become integral parts of the master equation, the previous papers are generally devoted to Neumann boundary conditions, which do not lead to such a phenomenon. An exception to this rule is the recent paper \cite{pazanin} which involves Robin-type boundary conditions; again, the analysis in that paper is carried out in Sobolev-type Hilbert space and allows treating more general geometries and boundary conditions than in our present paper, but the question of the role of boundary conditions in the limit and in the approximating equations is not discussed there. (Moreover, \cite{pazanin} is devoted to a completely different equation.)  There are also remarks on Robin-type boundary conditions in \cite{hale} and \cite{raugel}, but they are of marginal character: to the best of our knowledge, the fact that in the thin layer approximation boundary and transmission conditions `jump into' the limit equation has not been appropriately described yet.

Finally, it should be mentioned that, while the present paper is purely theoretical, the original inspirations to this analysis came from modeling diffusion of kinases in $B$ lymphocytes which have extremely large nuclei and thus the 3D region where kinases diffuse resembles a 2D manifold. See  
\cites{hat,hat2011,kazlip,dlajima,thin}.

\newcommand{\be}{\mc B}

\section{Analysis in $L^2$}\label{ldwa}

\subsection{Intuitions and the limit master equation} \label{iatlme}

For our case-study we choose the following situation. Let $\be $ 
(`b' for `base') be a bounded, open  subset of $\R^2$ with Lipschitz boundary. Given $\eps \in (0,1]$, we consider the following `split' solid (cylinder) 
$\Omega_\eps = \Omega_\eps^- \cup \Omega_\eps^+$ formed by two bordering thin layers 
\begin{align*}
 \Omega_\eps^- &\coloneqq \{ (x,y,z)\in \R^3 : (x,y) \in \be, -\eps < z < 0\},\\
 \Omega_\eps^+ &\coloneqq \{ (x,y,z)\in \R^3 : (x,y) \in \be, 0 < z <\eps\}. \\
\end{align*}
This solid's  lower and upper bases will be denoted 
\newcommand{\bue}{\be_\eps^{\text{up}}}
\newcommand{\ble}{\be_\eps^{\text{lo}}}
\newcommand{\bpe}{\be^{+}}
\newcommand{\bme}{\be^-}
\newcommand{\buej}{\be_1^{\text{up}}}
\newcommand{\blej}{\be_1^{\text{lo}}}
\newcommand{\bpej}{\be_1^{+}}
\newcommand{\bmej}{\be_1-}
\newcommand{\buen}{\be^{\text{up}}}
\newcommand{\blen}{\be^{\text{lo}}}
\newcommand{\bpen}{\be^{+}}
\newcommand{\bmen}{\be^-}
\begin{align*}
 \ble &\coloneqq \{ (x,y,z)\in \R^3 : (x,y) \in \be, z=-\eps \},\\
 \bue &\coloneqq \{ (x,y,z)\in \R^3 : (x,y) \in \be, z =\eps\}.
\end{align*}
We imagine that the plane separating the layers $\Omega_\eps^-$ and $\Omega_\eps^+$ is a semi-permeable membrane, and thus distinguish between what is `right above' and `right below' this plane. Therefore, we introduce (see Figure \ref{ef1}): 
\begin{align*}
\bme &\coloneqq \{ (x,y,z)\in \R^3 : (x,y) \in B, z=0-\},\\ 
\bpe &\coloneqq \{ (x,y,z)\in \R^3 : (x,y) \in B, z =0+\}.
\end{align*}
\begin{figure}
\includegraphics[scale=0.6]{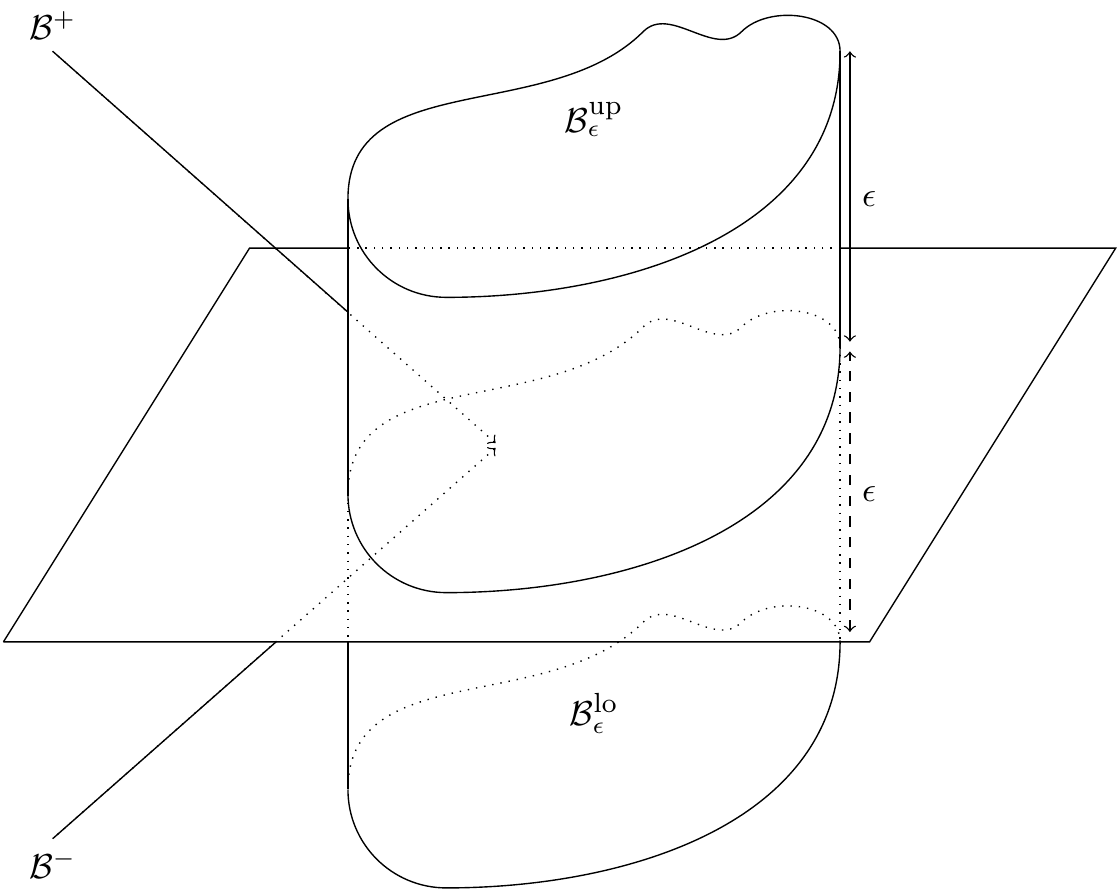}
\caption{Two thin layers separated by a semi-permeable flat membrane}\label{ef1}
\end{figure}
\hspace{-0.2cm} Having prepared the stage for the analysis, we consider the following reaction-diffusion equation in \( \Omega_\eps \): \begin{align}
\label{eq.diff1}
\partial_t u(t,x,y,z) &= \Delta_{3D} u(t,x,y,z)+F (u(t,x,y,z)), \, (x,y,z) \in  \Omega_\eps, t >0, \end{align}
where $\Delta_{3D}=\partial_x^2 + \partial_y^2 + \partial_z^2,$
and $F:\R \to \R$ in the reaction term is assumed globally Lipschitz continuous. 
In addition to usual Neumann boundary conditions on the sides of the solid, on the bases $\bue$ and $\ble$ we impose Robin boundary conditions of the form
\begin{align}  
\partial_z u(t,x,y,-\eps )&= \eps c^-(x,y)u(t,x,y,-\eps), 
   \label{eq.robin1} \\ 
\partial_z u(t,x,y,\eps)&=  -\eps c^+(x,y)u(t,x,y,\eps),\qquad (x,y) \in \be, t >0, \nonumber
\end{align}
where $c^-,c^+: \R^2 \to \R$ 
are given, measurable, essentially bounded functions. As explained in \cites{dlajima,thin}, the scaling factor (i.e., $\eps$) is needed in these boundary conditions; otherwise the limit (as $\eps\to 0$) discussed below will be uninteresting. These boundary conditions describe a stochastic mechanism of removing some of the diffusing particles touching the boundaries in regions where $c^-$ and $c^+$ are positive, and a similar mechanism of an inflow of new particles from regions of the boundary where $c^-$ and $c^+$ are negative.  

Moreover, on $\bme$ and $\bpe$ we impose the following transmission conditions modeling semi-permeability of the membrane: 
\begin{align}
\partial_z u(t,x,y,0-)&= \eps \alpha (x,y)[u(x,y,0-)-u(x,y,0+)],
   \label{eq.robin2} \\ 
\partial_z u(t,x,y,0+)&= \eps \beta (x,y)[u(x,y,0+)-u(x,y,0-)], \qquad (x,y) \in \be, t >0, \nonumber
\end{align}
where $\alpha, \beta: \R^2 \to [0,\infty)$ are given, essentially bounded functions. These conditions describe a stochastic mechanism in which a particle in the upper layer may filter through the membrane to the lower layer, and vice versa. The functions $\alpha$ and $\beta$ are permeability coefficients: the larger is $\alpha$ in a given subset of the membrane, the shorter is the average time needed for a particle to filter through this part of the membrane from the lower to the upper layer, and the larger is the $\beta$ the shorter is the average time for a particle to filter from the upper to the lower layer (see \cites{bobmor,zmarkusem,lejayn}, see also \cite{knigaz}*{p. 66} and the references given there).  
For our subsequent analysis (i.e., our generation and convergence results) the assumption that 
$\alpha$ and $\beta$ are non-negative is not needed; but a probabilistic interpretation of transmission conditions \eqref{eq.robin2} with negative $\alpha$ and $\beta$ is less pleasing.

Our main objective is to study behavior of solutions to equations \eqref{eq.diff1}--\eqref{eq.robin2} as $\eps$ converges to $0$.
We will argue that in this process, these solutions become more and more `flat in the $z$-direction', i.e., become less and less dependent on $z$, and thus in the limit may be regarded as functions of two variables (plus time). The functions obtained in the lower and upper layers differ, however, and thus will be denoted $u^-(t,x,y)$ and $u^+(t,x,y)$, respectively, and interpreted as functions on the upper and lower sides of the membrane.  As it transpires, dynamics  of $u^-$ and $u^+$ in time is governed by the master equation \eqref{intro:2}, where $F(u^-)$ is a shorthand for the function $(x,y)\mapsto F(u^-(x,y))$, and similarly with $F(u^+).$  
This is a system of reaction-diffusion equations involving two-dimensional diffusion on the two sides of the membrane $\be$. As in \cites{dlajima,thin,zmarkusem2b}, in the limit the boundary  conditions  \eqref{eq.robin1} `jump into' the main equation to form new terms of the reaction part. If $c^-$ and $c^+$ are non-negative, the terms \[ - c^-u^- \mquad {and} - c^+u^+\] describe the process of random `killing' of diffusing particles on the upper and lower sides, respectively. On the parts where $c^-$ and $c^+$ are negative, these terms model inflow of new diffusing particles. 
However, it is the matrix 
\begin{equation}\label{macierz} \begin{pmatrix} - \alpha & \alpha \\ \beta & - \beta \end{pmatrix} \end{equation}
featuring in  \eqref{intro:2}, that constitutes the most most intriguing part of the limit system.
 As already explained, $\alpha $ and $\beta$ in \eqref{eq.robin2} should be interpreted as permeability coefficients of the membrane. Here, they play a similar role: they should be interpreted as \emph{jump intensities}: particles diffusing on, say, the lower side of the membrane, may jump to the upper side; in regions where $\alpha $ is large, expected times to such jumps are shorter than those in the regions where $\alpha$ is small. Similarly, a particle diffusing on the region of the upper side where $\beta$ is large will jump to the lower side on average quicker than a similar particle diffusing in a region where $\beta$ is small. Remarkably, if $\alpha=\beta=0$, \eqref{eq.robin2} reduces to Neumann boundary conditions: the membrane is non-permeable, and  diffusing particles in lower and upper layers never filter to the other side. A similar observation may be made concerning \eqref{intro:2}: for $\alpha=\beta=0$, the system is uncoupled: the are no jumps between the lower and upper sides.

\subsection{A view through a magnifying glass} \label{avtamg}

 To see that solutions to \eqref{eq.diff1}--\eqref{eq.robin2} gradually lose dependence on $z$ variable, we look at $\Omega_\eps$ through a magnifying glass, by  introducing the  change of variables, $\tilde{z} = \eps^{-1}z$, which transforms $\Omega_\eps$ into \[\Omega:=\Omega_1. \]
To this end, we write $\tilde{u}(t,x,y,\tilde{z}) = u(t,x,y,\eps^{-1}z)$. Then equation \eqref{eq.diff1} transforms to 
\begin{align}
\partial_t \tilde{u}(t,x,y,\tilde{z}) &= (\partial_x^2  +  \partial_y^2 + \eps^{-2}\partial_z^2) \tilde{u}(t,x,y,\tilde{z})  + F(\tilde{u} (t,x,y,\tilde{z}) ), \label{eq.diff2} \end{align}
for $(x,y,\tilde{z}) \in \Omega, t >0 $ while the boundary conditions \eqref{eq.robin1} become
\begin{align}
\partial_{\tilde{z}} \tilde{u}(t,x,y,-1) &= \eps^2 c^-(x,y)\tilde{u}(x,y,-1) , \label{eq.boundary}\\
\partial_{\tilde{z}} \tilde{u}(t,x,y,1) &= - \eps^2 c^+(x,y)\tilde{u}(x,y,1), \qquad x,y\in \R, t >0. \nonumber  \end{align}
Similarly, the transmission conditions become 
\begin{align}
\partial_{\tilde z} \tilde u(t,x,y,0-)&= \eps^2 \alpha (x,y)[\tilde u(x,y,0-)-\tilde u(x,y,0+)],  \label{eq.boundary2} \\  
\partial_{\tilde z} \tilde u(t,x,y,0+)&= \eps^2 \beta (x,y)[\tilde u(x,y,0+)-\tilde u(x,y,0-)],
 \qquad x,y \in \R, t >0. \nonumber
\end{align}

\subsection{A generation theorem in $L^2(\Omega)$}\label{agtinl} 

For notational simplicity we drop the tildes, and then rewrite system \eqref{eq.diff2}-\eqref{eq.boundary2} as an abstract evolution equation on the space $L^2(\Omega)$, as follows:
\begin{equation}\label{eq.cauchyproblemf}
\partial_t u_\eps(t) = A_\eps  u_\eps (t) + F(u_\eps (t)), \qquad u_\eps (0) = \overset{\text{o}}u\in L^2(\Omega)
\end{equation}
where $u_\eps : [0,\infty) \to L^2(\Omega)$ and $A_\eps$ is a suitable realization of the differential operator
$\partial_x^2+\partial_y^2+\eps^{-2}\partial_z^2$, subject to the boundary and transmission conditions \eqref{eq.boundary}-\eqref{eq.boundary2} (see later on). The reaction term $F$, although denoted by the same letter as the function featuring in \eqref{eq.diff1}, has a slightly different meaning. Namely, for a $u\in L^2(\Omega)$ we may define 
\begin{equation}\label{ef} 
\left ( \mathsf F(u) \right ) (x,y,z) := F(u(x,y,z))\end{equation}
where $F$ on the right-hand side is the function from \eqref{eq.diff1}. Assuming that $F(0)=0$ or, more generally, that there is a $u\in L^2(\Omega)$ such that $\mathsf F(u) \in L^2(\Omega)$, we check, using the existence of a global Lipschitz constant for $F$, that \eqref{ef} defines a globally Lipschitz continuous map $L^2(\Omega)\to L^2(\Omega)$, with the Lipschitz constant inherited from $F$. In \eqref{eq.cauchyproblemf}, for simplicity of notation,  we do not distinguish between $F$ and $\mathsf F$. 

As discussed in \cite{zmarkusem2b}, in dealing with well-posedness and convergence of solutions to \eqref{eq.cauchyproblemf}, it is a good strategy to work first with the related problem without the nonlinear term (see also the end of Section \ref{zbieznosc}):
\begin{equation}\label{eq.cauchyproblem}
\partial_t u_\eps(t) = A_\eps  u_\eps (t), \qquad u_\eps (0) = \overset{\text{o}}u\in L^2(\Omega),
\end{equation}
and we will follow this path. To this end, we start by establishing well-posedness of the problem \eqref{eq.cauchyproblem} making use of the theory of \emph{sesquilinear forms} (see e.g. \cites{ouhabaz,kato} for details of this theory).

 We recall that if $H$ is a complex Hilbert space, a \emph{sesquilinear form}
on $H$ is a mapping $\mathfrak{a}: D(\mathfrak{a}) \times D(\mathfrak{a}) \to \C$ which is linear in the first component and antilinear in the second component. 
A form $\mathfrak{a}$ is called \emph{accretive} if
$\Re \mathfrak{a}[u,u] \geq 0$ for all $u\in D(\mathfrak{a})$; it is called \emph{closed}, if $D(\mathfrak{a})$ is a Hilbert space with respect to the inner product $[u,v]_\mathfrak{a} = \Re \mathfrak{a}[u,v] + [u,v]_H$. A sesquilinear form is called \emph{densely defined}, if $D(\mathfrak{a})$ is dense in $H$. It is called \emph{symmetric}, if $\mathfrak{a}[u,v] = \mathfrak{a}[v,u]$. As customary, we will write $\mathfrak a[u]$ for $\mathfrak a[u,u]$.  

Given an accretive and closed sesquilinear form $\mathfrak{a}$ that is densely defined, we can define the associated operator $A$ by setting 
\[
D(A) = \{ u\in D(\mathfrak{a}) : \exists\, f\in H : \mathfrak{a}[u,v] = -[f, v]_H\,\,\forall\, v\in D(\mathfrak{a})\} 
\]
and $Au\coloneqq f$. We thus have $\mathfrak{a}[u,v] = -[Au, v]_H$ for all $u\in D(A)$ and $v\in D(\mathfrak{a})$.
It is well known that the operator $A$ associated to an accretive, densely defined and closed sesquilinear form, is the generator of an analytic contraction semigroup on the space $H$.

Our goal in this section is  to find a sesquilinear form $\mathfrak{a}_\eps$ such that \begin{equation} \label{aeps} A_\eps  \coloneqq
\partial_x^2+\partial_y^2+\eps^{-2}\partial_z^2\end{equation} with boundary and transmission conditions \eqref{eq.boundary}--\eqref{eq.boundary2} is its associated operator. 
To this end, we define $\ha \subset L^2(\Omega)$ by
\[
\ha 
\coloneqq \{ v \in L^2(\Omega) : v_{|\Omega^+} \in H^1(\Omega^+),   v_{|\Omega^-} \in H^1(\Omega^-) \}, 
\]
where $\Omega^+\coloneqq  \Omega^+_1$ and $ \Omega^-\coloneqq  \Omega^-_1$; this is going to be the domain of the forms $\mathfrak{a}_\eps, \eps >0.$ We recall (see e.g., \cite{adams}*
{Part I, Case C of Theorem 4.12}) that 
each $w \in  H^1(\Omega^+)$ leaves square integrable traces on $\buen\coloneqq \buej $ and $\bpen$ (denoted $w(x,y,1)$ and $w(x,y,0+)$, respectively), and that the trace operators are continuous. Similarly, each $w \in  H^1(\Omega^-)$ leaves square integrable traces on $\blen \coloneqq \blej $ and $\bmen $ (denoted $w(x,y,-1)$ and $w(x,y,0-)$, respectively), and the trace operators are bounded. Hence, a $v \in \ha$ leaves square integrable traces on each of these four sets, and we have the following four bounded trace  operators: 
\begin{equation} \begin{matrix} T^{\text{up}}: \ha \to L^2 (\buen),  & T^{\text{lo}}: \ha \to L^2 (\blen),\\  T^{+}: \ha \to L^2 (\bpen), & T^{-}: \ha \to L^2 (\bmen),\end{matrix}   \label{slady} \end{equation}
where $\ha$ is equipped with the norm 
\[ \|v\|_\ha = \sqrt { \|v_{|{\Omega^+}}\|_{H^1(\Omega^+)}^2 +  \|v_{|{\Omega^-}}\|_{H^1(\Omega^-)}^2}.\]

Finally, let \[ D_\eps \subset L^2(\Omega)\] be composed of $u$ such that 
$v_{|\Omega^+} \in H^2(\Omega^+),$    $v_{|\Omega^-} \in H^2(\Omega^-) $ and such that boundary and transmission conditions \eqref{eq.boundary}--\eqref{eq.boundary2} are satisfied in the weak sense (see e.g. \cite{zmarkusem}*{Section 4.2} for details). For such $u$, we define $A_\eps  u $ as $(\partial_x^2+\partial_y^2+\eps^{-2}\partial_z^2)u$.  

With these definitions, we take $u\in D_\eps$ and $v\in \ha$, multiply $A_\eps  u$ by $\bar v$, and integrate the product over $\Omega^+$. 
Integration by parts formula then shows that  
\begin{align}
\int_{\Omega^+} (A_\eps  u) \bar{v}\, & (x,y,z)\di (x,y,z) \notag \\ &=  
-\int_{\Omega^+} \left [ \partial_x u\partial_x\bar v + \partial_y u \partial_y\bar v + \eps^{-2}\partial_zu \partial_z\bar v\, \right ] (x,y,z) \di (x,y,z) \notag\\
&
\quad + \eps^{-2}\int_{\be} \partial_zu(x,y,1) \bar v(x,y,1)  \di (x,y)\notag\\
&\quad - \eps^{-2}\int_{\be}\partial_z u(x,y,0+)\bar v(x,y,0+)\,   \di (x,y)  \notag 
\\& =  - \int_{\Omega^+} \left [ \partial_x u\partial_x\bar v + \partial_y u \partial_y\bar v + \eps^{-2}\partial_zu \partial_z\bar v\right ] \,\di (x,y,z)\notag\\
&\quad 
- \int_{\be} c^+(x,y)u(x,y,1)\bar v(x,y,1) \,\di (x,y)  \label{eq.a}
\\
&\quad 
- \int_{\be} \beta (x,y)[u(x,y,0+)- u(x,y,0-)] \bar v(x,y,0+) \,\di (x,y).  \notag
\end{align}
Here, in the second step, we used the first of the boundary conditions \eqref{eq.boundary} and the first of the transmission conditions \eqref{eq.boundary2}. Similarly, we check that 
\begin{align}
\int_{\Omega^-} (A_\eps  u) \bar{v} &(x,y,z) \,  \di (x,y,z)\nonumber \\
& =  - \int_{\Omega^-} \left [ \partial_x u\partial_x\bar v + \partial_y u \partial_y\bar v + \eps^{-2}\partial_zu \partial_z\bar v\right ] (x,y,z)\,\di (x,y,z)\notag\\
&\quad 
- \int_{\be} c^-(x,y)u(x,y,-1)\bar v(x,y,-1) \,\di (x,y) \notag \\
&\quad 
- \int_{\be} \alpha (x,y)[u(x,y,0-)- u(x,y,0+)] \bar v(x,y,0-) \,\di (x,y). 
\label{eq.b}
\end{align}
This calculation suggests that, for $u,v \in \ha$, we should define 
\begin{align} \mathfrak{a}_\eps [u,v] \notag &:= \int_{\Omega} \left [ \partial_x u\partial_x\bar v + \partial_y u \partial_y\bar v + \eps^{-2}\partial_zu \partial_z\bar v\right ] (x,y,z)\,\di (x,y,z)\\
&\phantom{:=}+ \int_{\be} c^+(x,y)u(x,y,1)\bar v(x,y,1) \,\di (x,y)  \notag
\\
&\phantom{:=}+ \int_{\be} c^-(x,y)u(x,y,-1)\bar v(x,y,-1) \,\di (x,y) \notag \\
&\phantom{:=} 
+ \int_{\be} \beta (x,y)[u(x,y,0+)- u(x,y,0-)] \bar v(x,y,0+) \,\di (x,y)  \notag\\
&\phantom{:=}+\int_{\be} \alpha (x,y)[u(x,y,0-)- u(x,y,0+)] \bar v(x,y,0-) \,\di (x,y). \label{forma} 
\end{align}

\begin{prop}\label{l.form}
Forms $\mathfrak a_\eps$ are densely defined and closed. Furthermore, there is a $\gamma>0$ such that  for all $\eps \in (0,1]$,
\begin{equation}\label{nierownosc}
|\Im (\mathfrak{a}_\eps +\gamma)[u] | \le \Re (\mathfrak{a}_\eps +\gamma)[u], \qquad u \in \ha.   \end{equation}
\end{prop}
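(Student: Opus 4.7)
The plan is to establish the three assertions---density, closedness, and the sectoriality estimate~\eqref{nierownosc}---by decomposing $\mathfrak a_\eps[u]$ into a non-negative volume piece plus a perturbation depending only on the four boundary traces of $u$, then controlling the perturbation via trace inequalities on the fixed half-cylinders $\Omega^\pm$. Density is immediate: $C_c^\infty(\Omega)\subset \ha$ is already dense in $L^2(\Omega)$. The bulk of the work therefore goes into~\eqref{nierownosc}, and closedness will fall out of the same estimates.

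For~\eqref{nierownosc} I first split $\mathfrak a_\eps[u]$ into the volume part
\[
G[u]:=\int_\Omega \bigl(|\partial_x u|^2+|\partial_y u|^2+\eps^{-2}|\partial_z u|^2\bigr)\ud(x,y,z),
\]
the Robin part $R_c[u]:=\int_\be c^+|u(\cdot,1)|^2\ud(x,y)+\int_\be c^-|u(\cdot,-1)|^2\ud(x,y)$, and the membrane part $M[u]:=\int_\be\bigl(\beta(b-a)\bar b+\alpha(a-b)\bar a\bigr)\ud(x,y)$, where $a:=u(\cdot,0-)$, $b:=u(\cdot,0+)$. Since $G$ and $R_c$ are real, $\Im\mathfrak a_\eps[u]=\Im M[u]$, and an elementary calculation yields
\[
\Re M[u]=\int_\be \tfrac{\alpha+\beta}{2}|a-b|^2\ud(x,y)+\int_\be \tfrac{\alpha-\beta}{2}(|a|^2-|b|^2)\ud(x,y),
\]
together with $\Im M[u]=\int_\be (\alpha-\beta)\,\Im(a\bar b)\ud(x,y)$. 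The first summand of $\Re M[u]$ is non-negative and is discarded when seeking a lower bound; the remaining ``dangerous'' terms ($R_c[u]$, the second summand of $\Re M[u]$, and $\Im M[u]$) depend only on the four traces of $u$, and each is bounded in modulus by a constant (depending only on $\|\alpha\|_\infty$, $\|\beta\|_\infty$, $\|c^\pm\|_\infty$) times $\int_\be(|a|^2+|b|^2+|u(\cdot,1)|^2+|u(\cdot,-1)|^2)\ud(x,y)$.

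Next I invoke the standard trace inequality on each fixed cylinder $\Omega^\pm$: for every $\delta>0$ and for each of the four boundary levels $s\in\{\pm 1,0\pm\}$,
\[
\int_\be |u(\cdot,s)|^2\ud(x,y)\le \delta\,\|\partial_z u\|_{L^2(\Omega^\pm)}^2+C(\delta)\,\|u\|_{L^2(\Omega^\pm)}^2,
\]
with $C(\delta)$ independent of $\eps$ (the geometry of $\Omega^\pm$ is fixed). Combined with the crucial one-sided bound $\eps^{-2}\|\partial_z u\|^2\ge \|\partial_z u\|^2$ valid for $\eps\in(0,1]$, I choose $\delta$ small enough (depending only on the $L^\infty$-norms of $\alpha$, $\beta$, $c^\pm$) so that the $\delta\|\partial_z u\|^2$ contributions produced by the previous paragraph's bounds are absorbed into a fraction of $G[u]$, and then $\gamma$ large enough (with the same dependence) so that the leftover $C(\delta)\|u\|_{L^2}^2$ contributions are dominated by $\gamma\|u\|_{L^2}^2$. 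The outcome is exactly~\eqref{nierownosc}, with a $\gamma$ uniform in $\eps\in(0,1]$.

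Closedness follows from the same chain of inequalities, which yields $\|u\|_\ha^2\asymp \Re(\mathfrak a_\eps+\gamma)[u]+\|u\|_{L^2}^2$ (the equivalence constants may depend on $\eps$, but this is harmless for closedness), and $\ha$ is already complete in its natural norm as a closed subspace of $H^1(\Omega^+)\oplus H^1(\Omega^-)$. The principal obstacle is the sign-indefinite character of $\Re M[u]$ when $\alpha\neq\beta$: the algebraic identity above, separating the non-negative $\tfrac{\alpha+\beta}{2}|a-b|^2$ from the free-sign $\tfrac{\alpha-\beta}{2}(|a|^2-|b|^2)$, is what permits a clean absorption, while the $\eps$-uniform choice of $\gamma$ hinges on $\eps^{-2}\ge 1$ surviving the absorption.
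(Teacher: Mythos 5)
Your proof is correct, and in its key technical step it takes a genuinely different (and more elementary) route than the paper. The paper splits $\mathfrak a_\eps=\mathfrak b_\eps+\mathfrak c$ exactly as you split off $G[u]$, and both arguments rest on the same monotonicity $\mathfrak b_1[u]\le\mathfrak b_\eps[u]$ for $\eps\in(0,1]$ to make the absorption uniform in $\eps$; but the paper obtains the crucial estimate $|\mathfrak c[u]|\le\delta\|u\|_\ha^2+c(\delta)\|u\|_{L^2(\Omega)}^2$ abstractly, by invoking compactness of the four trace operators (which uses the Lipschitz regularity of $\partial\be$ and a result of Ne\v{c}as) together with an Ehrling-type lemma, and then deduces closedness of $\mathfrak a_\eps$ from Kato's relative-boundedness perturbation theorem (Theorem VI.3.11). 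You instead prove the same $\delta$--$C(\delta)$ inequality by hand, via the fiber-wise one-dimensional trace inequality on the flat horizontal faces $z=\pm1$, $z=0\pm$, which needs no compactness, no regularity of $\partial\be$, and yields an explicit, $\eps$-independent $C(\delta)$ and hence an explicit $\gamma$; you then get closedness directly from the norm equivalence $\|u\|_\ha^2\asymp\Re(\mathfrak a_\eps+\gamma)[u]+\|u\|_{L^2(\Omega)}^2$ rather than through Kato. Your explicit decomposition of $\Re M[u]$ and $\Im M[u]$ (which checks out, and shows in passing that $\Im\mathfrak a_\eps[u]$ vanishes when $\alpha=\beta$) is a nice refinement, though not strictly needed, since a crude bound of $|M[u]|$ by the trace norms would suffice. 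The trade-off: the paper's compactness argument is the one that would survive a non-flat interface or a more general geometry, while yours is self-contained and quantitative for the cylindrical setting at hand.
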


\begin{proof}It is clear that $\ha$ is dense in $L^2(\Omega)$. 
Let (only in this proof)
\[ \mathfrak{b}_\eps [u,v]= \int_{\Omega} \left [ \partial_x u\partial_x\bar v + \partial_y u \partial_y\bar v + \eps^{-2}\partial_zu \partial_z\bar v\right ] (x,y,z) \,\di (x,y,z), \qquad u,v \in \ha \]
and 
\( \mathfrak{c}  = \mathfrak{a}_\eps - \mathfrak{b}_\eps \)
(note that $\mathfrak{c}$ does not depend on $\eps$). Then, $\mathfrak b_\eps$ is symmetric and, since $\eps \in (0,1]$, 
\[  \|\nabla u\|_{L^2(\Omega)}^2 = \mathfrak b_1 [u] \le  \mathfrak b_\eps [u ] \le \eps^{-2} \|\nabla u \|_{L^2(\Omega)}^2 .\]
It follows that the forms $\mathfrak b_\eps$ are accretive. They are also closed, since using this inequality  it may be shown that for each $\eps$, the norm induced by $\mathfrak b_\eps $ is equivalent to the norm in $\ha$.

Turning to analysis of $\mathfrak{c}$ we note first of all that it is bounded: there is a constant $C$ such that 
\[ |\mathfrak c [u,v]| \le C \|u\|_\ha \|v\|_\ha ;\] 
this is because 
all the trace operators \eqref{slady} are bounded and $c^-, c^+, \alpha$ and $\beta$ are essentially bounded functions. Moreover, since the boundary of $B$ is assumed to be Lipschitz continuous, all the trace operators \eqref{slady} are compact (see \cite{nevas}*{Thm 6.2, p. 103}). Hence, if a sequence  $(u_n)_{n\ge 1} $ of elements of $\ha$ converges to $0$ weakly, sequences of its traces converge strongly to zero in the corresponding $L^2$ spaces. Since $c^+, c^-, \alpha$ and $\beta$ are essentially bounded, it follows that $\gra \mathfrak c[u_n]  =0$. Hence, by Lemma 7.3 in \cite{dan13}, for each $\delta >0$ there exists a $c(\delta) >0$ such that 
\begin{equation} \label{bound} |\mathfrak c[u]| \le \delta \|u\|_\ha^2 + c(\delta) \|u\|_{L^2(\Omega)}^2  .\end{equation}
By Theorem VI.3.11 in \cite{kato}, this inequality combined with the fact that $\mathfrak b_\eps$ is closed, shows that so is $\mathfrak a_\eps = \mathfrak b_\eps + \mathfrak c .$ Moreover, taking $\delta = \frac 12 $ in \eqref{bound} we obtain,
for $\gamma = 2 c(\frac 12) + 1$, 
\begin{align*}
\max \{ |\Re \mathfrak c[u]|, |\Im \mathfrak c[u]|\} \le \frac 12 \mathfrak b_1 [u] + \frac \gamma 2 \|u\|^2_{L^2(\Omega)}\le \frac 12 \mathfrak b_\eps [u] + \frac \gamma 2 \|u\|^2_{L^2(\Omega)}.
\end{align*}
Thus 
\begin{align*}
|\Im \mathfrak a_\eps [u] | = |\Im \mathfrak c [u] |\le \frac 12 \mathfrak b_\eps [u] + \frac \gamma 2 \|u\|^2_{L^2(\Omega)}
\end{align*}
and
\begin{align*}
\Re \mathfrak a_\eps [u] \ge \mathfrak b_\eps [u] - |\Re \mathfrak c[u]| \ge \frac 12 \mathfrak b_\eps [u] - \frac \gamma 2\|u\|^2_{L^2(\Omega)}. 
\end{align*}
It follows that 
\[ |\Im \mathfrak a_\eps [u] | \le \Re \mathfrak a_\eps [u] + \gamma \|u\|^2_{L^2(\Omega)}.\]
Since $\Im (\mathfrak a_\eps + \gamma )[u] = \Im \mathfrak a_\eps [u]$ and $\gamma [u] = \gamma \|u\|^2_{L^2(\Omega)},$
this inequality is equivalent to \eqref{nierownosc}. 
\end{proof}

Inequality \eqref{nierownosc} shows in particular that forms $\mathfrak a_\eps + \gamma$ are accretive. Thus, the related operators are generators of holomorphic contraction semigroups. These operators are equal to $A_\eps  - \gamma I$ where $I$ is the identity operator in $L^2(\Omega)$, and $(A_\eps, D(A_\eps))$ is the operator related to the form $\mathfrak a_\eps$. Calculations \eqref{eq.a} and \eqref{eq.b} show  that $(A_\eps,D(A_\eps))$ is an extension of $(A_\eps, D_\eps).$ We may thus write 
\[ \|\e^{tA_\eps} \|  \le \e^{\gamma t}; \]
here, and in what follows, $A_\eps  $ is always considered with domain $D(A_\eps)$. However, inequality \eqref{nierownosc}, reveals much more: the forms $\mathfrak a_\eps +\gamma $ are uniformly holomorphic and so are the semigroups generated by $A_\eps- \gamma I$. This information will be of crucial importance in the next section.

\subsection{Convergence as $\eps \to 0$}\label{zbieznosc}

Finally, we want to let $\eps\to 0$. To that end, we use a convergence theorem for uniformly holomorphic forms due to E. M. Ouhabaz \cite{ouh95} which generalizes the convergence theorem of 
B. Simon \cite{simon}, who dealt with the case of symmetric forms. To explain: the forms \[\widetilde{\mathfrak a}_\eps:=\mathfrak a_\eps +\gamma ,\] in addition to being accretive and uniformly holomorphic, have the following properties: 
\begin{itemize}
\item [(a) ] they have the same domain (i.e., $\ha$) and $\Re \widetilde{\mathfrak a}_\eps [u]\le \Re \widetilde{\mathfrak a}_{\eps'} [u]$ for all $u\in \ha$, provided $\eps\ge \eps'$ (which is to say that $\Re \widetilde{\mathfrak a}_\eps [u]$ increases as $\eps $ decreases), 
\item [ (b) ] the imaginary parts of $\widetilde{\mathfrak a}_\eps [u]$ do not depend on $\eps$. 
\end{itemize}
Ouhabaz shows that in such a case (and in a more general situation), the form 
$\widetilde{\mathfrak b} [u] \coloneqq \lim_{\eps \to 0} \widetilde{\mathfrak a}_\eps [u]$ (extended via polarization formula), defined  on the domain 
\[ D(\widetilde{\mathfrak b}) = \{ u \in \ha; \sup_{\eps \in (0,1]} \widetilde{\mathfrak a}_\eps[u] < \infty \}\]
is accretive, closed and sectorial (so that \eqref{nierownosc} holds for all $u \in D(\widetilde{\mathfrak b})$, if $\mathfrak a_\eps$ is replaced by  $\mathfrak b:= \widetilde {\mathfrak b} - \gamma$). As we shall see soon, in our case, in distinction to the situation considered by Ouhabaz, this form is not densely defined. Hence, the related operator, say $B+\gamma I$ (where $B$ is the operator related to $\mathfrak b$), generates a holomorphic semigroup merely on the subspace $H_0:= \overline {D(\widetilde{\mathfrak a})}$. Nevertheless, Ouhabaz's arguments may be extended to this case to show that 
\[ \lim_{\eps \to 0} (\mu I - A_\eps  - \gamma I)^{-1} = (\mu I - B- \gamma I)^{-1} P, \]
strongly,  for all $\mu $ in a sector of the complex plane (see the comment on p. 676 in \cite{zmarkusem}).
Here, $P$ is the orthogonal projection onto $H_0$. Using straightforward arguments involving contour integrals, presented in more detail in e.g. \cite{deg} or \cite{knigaz}*{Chapter 31}, one then deduces that 
\( \lim_{\eps \to 0} \e^{-\gamma t} \e^{tA_\eps} = \e^{-\gamma t} \e^{tB} P \) or, simply, 
\begin{equation}\label{polgrupy} \lim_{\eps \to 0}  \e^{tA_\eps} = \e^{tB} P, \qquad t >0 \end{equation}
 (strongly). For $u\in H_0$ this limit is uniform for $t$ in compact subsets of $[0,\infty)$; for other $u$ it is uniform for $t$ in compact subsets of $(0,\infty)$.

Hence, we are left with the task of  characterizing $D(\widetilde{\mathfrak b})= D(\mathfrak b)$, the form $\mathfrak b$, and the related operator $B$. We want to check to see that the limit dynamics is governed by \eqref{intro:2}.

The only term in the definition of $\widetilde{\mathfrak a}_\eps [u]$ (see  \eqref{forma}) that involves $\eps$ is 
\[
 \eps^{-2}\int_\Omega |\partial_z u|^2 (x,y,z) \, \di (x,y,z);
\]
it is thus clear that $\sup_{\eps \in (0,1]} \widetilde{\mathfrak a}_\eps [u] <\infty$  implies that $\partial_z u = 0$ almost everywhere, i.e. that $u$ does not depend on $z$. Conversely, if $u$ does not depend on $z$, then the supremum in question exists, because no term in the definition depends on $\eps $. 
Thus, more specifically, for $u \in D(\widetilde{\mathfrak b})= D(\mathfrak b)$ there are functions $u^-, u^+ \in H^1 (\be)$ such that 
\begin{align*} 
u (x,y,z) &= u^-(x,y), \qquad z \in (-1,0),\\
u (x,y,z) &= u^+(x,y), \qquad z \in (0,1), 
\end{align*}
almost surely in $(x,y,z)$. Hence, any $u$ may be identified with a pair of elements of $H^1(\be)$ and 
$\overline {D(\mathfrak b)}$ may be identified with the direct sum of two copies of $L^2(\be)$.
Moreover, by polarization formula, for $u$ and $v$ in $D(\mathfrak b)$,  
\begin{align*}
\mathfrak b[u,v] & := \int_{\be} \left [ \partial_x u^+\partial_x\bar v^+ + \partial_y u^+ \partial_y\bar v^+ \right ] \,\di \lam_2 \\ &\phantom{=}+ \int_{\be} \left [ \partial_x u^-\partial_x\bar v^- + \partial_y u^- \partial_y\bar v^- \right ] \,\di \lam_2\\
&\phantom{:=}+ \int_{\be} c^+u^+\bar v^+ \,\di \lam_2  + \int_{\be} c^-u^-\bar v^- \,\di \lam_2 \notag \\
&\phantom{:=} 
+ \int_{\be} \alpha (u^- - u^+) \bar v^- \,\di \lam_2  +\int_{\be} \beta (u^+- u^-) \bar v^+ \,\di \lam_2, \label{formal} 
\end{align*}
where $\lam_2$ is the two-dimensional Lebesgue measure. 
It is clear that the two terms in the third line here can be extended to the form defined on the entire $L^2(\be) \times L^2(\be)$, and that the associated operator is bounded and given by:
\[ \binom {u^-}{u^+} \mapsto - \binom {c^-u^-}{c^+u^+}. \]
We see the same operator also in \eqref{intro:2}. Similarly, the terms in the fourth line come from the bounded operator in $L^2(\be) \times L^2(\be)$ which may be represented by the matrix \eqref{macierz}. Moreover, the first term is well-known: the associated operator is the Neumann Laplace operator  $\Delta_{2D}$ (\ie the $2D$ Laplace operator with Neumann boundary conditions) in $L^2(\be)$ -- see e.g. Example 8.1.6 in \cite{arendtnotes}, and an analogous remark concerns the second term. Thus, the first two terms are associated with the operator 
\[ \binom {u^-}{u^+} \mapsto \binom {\Delta_{2D} u^-}{\Delta_{2D} u^+}, \] 
and the operator related to the entire limit form may be represented as 
\[ B= \begin{pmatrix} \Delta_{2D} -c^-& 0 \\ 0 & \Delta_{2D} -c^+ \end{pmatrix} + \begin{pmatrix} - \alpha & \alpha \\ \beta & - \beta \end{pmatrix}. \]
In other words, formula \eqref{polgrupy} shows that mild solutions of \eqref{eq.diff1}-\eqref{eq.robin2} with initial condition $\overset{\text{o}}u\in L^2(\Omega)$, and non-linear term equal $0$, 
converge to those of \eqref{intro:2} with initial condition $P\overset{\text{o}}u = (P^- \overset{\text{o}}u, P^+\overset{\text{o}}u)$ where 
\begin{equation}\label{rzuty} P^- \overset{\text{o}}u (x,y) = \int_{-1}^0 \overset{\text{o}}u(x,y,z) \ud z \qquad P^+ \overset{\text{o}}u (x,y) = \int_0^1 \overset{\text{o}}u(x,y,z) \ud z. \end{equation}

Hence, it remains to take care of the non-linearity. By the main theorem of \cite{zmarkusem2b}, however, convergence of semigroups $\sem{A_\eps}$, even in a degenerate manner, as in \eqref{polgrupy}, implies convergence of mild solutions of  
 \eqref{eq.diff1}-\eqref{eq.robin2} to solutions of
\[ \partial_t u (t) = Bu (t) + PF(u(t)).\]
Moreover, since $F$ (or, in fact, $\mathsf F$, see \eqref{ef}) leaves $H_0$ invariant, $P$ is not needed on the right-hand side here: we have proved that mild solutions of \eqref{eq.diff1}-\eqref{eq.robin2} with initial condition $\overset{\text{o}}u\in L^2(\Omega)$ 
converge to those of \eqref{intro:2} with initial condition $P\overset{\text{o}}u$.

\section{Analysis in $C$}\label{aic}

\subsection{The main result; robustness}\label{tmrr} 

For an analogous result in the space of continuous functions we need more restrictive assumptions on the base $\be$: we assume that $\be$ is a bounded, connected and open set, and that its boundary $\partial \be$ is of class $C^{2,\kappa}, \kappa \in (0,1]$ (see \cite{ethier}*{p. 368}). This allows concluding that the Neumann Laplace operator with suitable domain in $C(\be)$ is closable and its closure generates a Feller semigroup in $C(\be)$ (\cite{ethier}*{p. 369}). For technical reasons we also need to be content with constant permeability coefficients $\alpha$ and $\beta$. On the other hand,  we generalize the mechanism of filtering through the membrane. More specifically, given parameters $p,q\in [0,1]$ and Borel probability measures $\mu$ and $\nu$ on $[-1,0]$ and $[0,1]$, respectively, we consider the following transmission conditions describing permeability of  the membrane separating the lower and upper layers:
{\small
\begin{align}
(\eps p\partial^2_z  + (1-p)\partial_z) u(t,x,y,0-)&= \eps \alpha \left [ \int_{[0+,\eps]} u(t,x,y,z)\nu_\eps(\udd z)- u(t,x,y,0-)\right ], \nonumber \\ 
(\eps q\partial^2_z  - (1-q)\partial_z) u(t,x,y,0+)&= \eps \beta \left [\int_{[-\eps,0-]}u(t,x,y,z)\mu_\eps(\udd z)-u(t,x,y,0+)\right ], 
 \label{C:eq.robin2} 
\end{align}}
\hspace{-0.3cm} where $(x,y) \in \be, t >0$, $\mu_\eps $ is the transport of $\mu$ to $[-\eps,0-]$ (via the map $x\mapsto -\eps x$), and $\nu_\eps $ is the transport of $\nu $ to $[0+,\eps]$ via $x\mapsto \eps x.$  Again, epsilons in these transmission conditions are arranged in such a way that the limit as $\eps \to 0$ is non-trivial. Remarkably, if $p=q=1$ the epsilons cancel out, \ie no scaling is needed.

In comparison to \eqref{eq.robin2}, these boundary conditions describe a more general way Brownian particles on one side of the membrane may filter to the other side (see also Section \ref{tvc:agt}): the additional term with the second derivative speaks of the possibility for a particle to stick to the membrane for some random time. In particular, for $p=1$ the particles are stopped at the lower part of the membrane, and after an exponential time (with parameter $\alpha$) released to jump to the upper side. The measures $\mu_\eps $ and $\nu_\eps$  describe a random position of a particle after it filters from one side of the membrane to the other. For $p=q=0$ and $\mu=\delta_{0},\nu=\delta_{0}$ where $\delta_{0}$ is the Dirac measure concentrated at $0$, transmission conditions \eqref{C:eq.robin2}  reduce to \eqref{eq.robin2}.  

As in Section \ref{avtamg}, instead of working with `the same equation' but in varying  spaces $C(\Omega_\eps), \eps \in (0,1]$ of continuous functions on thiner and thiner domains $\Omega_\eps = \Omega_\eps^- \cup \Omega_\eps^+$ where, this time,  
\begin{align*}
 \Omega_\eps^- &\coloneqq \{ (x,y,z)\in \R^3 : (x,y) \in \be, -\eps \le z \le 0-\},\\
 \Omega_\eps^+ &\coloneqq \{ (x,y,z)\in \R^3 : (x,y) \in \be, 0+ \le z \le \eps\}, 
\end{align*}
we blow up the thin coordinate $z$ by dividing it by $\eps$, to work with a family of equations in a single reference space \[ C(\Omega)\coloneqq C(\Omega_1).\] Notably, this transformation is an isometric isomorphism of the spaces of continuous functions involved (but not of the $L^2$-type spaces considered in Section \ref{ldwa}): we are using a magnifying glass and not a distorting mirror.

\newcommand{\naeps}{\mathfrak A_\eps}
\newcommand{\onaeps}{\overline{\mathfrak A_\eps}}
The family of equations in $C(\Omega)$ we obtain is of the form
\begin{equation}\label{cauchy} \partial_t u(t) = \onaeps u(t), \qquad u(0)=\overset{\text{o}}u\in C(\Omega),  \end{equation} 
where $\naeps  = \partial_x^2+\partial_y^2+\eps^{-2}\partial_z^2$ and $\dom{\naeps}$ is composed of $u\in C(\Omega)$ such that (a) when restricted to either of $\Omega^-$ or $\Omega^+$ they are of class $C^2$ and for each $z\in [-1,0-]\cup[0+,1]$, $(x,y)\mapsto u(x,y,z) $ is of class $C^{2,\kappa}$ and (b) besides Neumann boundary conditions on the boundary of $\Omega$, they satisfy the following transmission conditions on the membrane separating the upper and lower parts of $\Omega$: 
\begin{align}
(p\partial^2_z  + (1-p)\partial_z) u(x,y,0-)&= \eps^2 \alpha \left [ \nu_{x,y} (u)- u(x,y,0-)\right ], \nonumber \\ 
(q\partial^2_z  - (1-q)\partial_z) u(x,y,0+)&= \eps^2 \beta \left [\mu_{x,y} (u)-u(x,y,0+)\right ], 
 \label{C:eq.robin3} 
\end{align}
where $(x,y)\in \be $, whereas $\nu_{x,y} (u) $ and $\nu_{x,y} (u)$ are shorthands for \[ \int_{[0+,1]} u(x,y,z)\nu (\udd z) \mquad{ and } \int_{[-1,0-]}u(x,y,z)\mu(\udd z),\]
respectively. As we shall see later,  $\naeps, \eps \in (0,1]$ are closable and  that their closures $\onaeps, \eps \in (0,1]$ are conservative Feller generators (see Proposition \ref{tsgb:prop:1}). Our goal is to study the limit 
\( \grae \e^{t\onaeps}. \)


We note the absence of the non-linear term in \eqref{cauchy}. By the main  theorem of \cite{zmarkusem2b}  (already alluded to at the end of  Section \ref{ldwa}), a convergence result for semigroups implies also convergence of solutions of the related semi-linear equations (with globally Lipschitz continuous non-linearity), and thus we disposed of this term without loss of generality. Furthermore, we note that,  for simplicity of exposition, since the role of $c^-$ and $c^+$ of Section \ref{ldwa} has been already explained in \cites{dlajima,thin,zmarkusem2b}, and we want to focus on the more intriguing role of $\alpha$ and $\beta$, it is assumed that $c^-$ and $c^+$  are now zero (since Neumann boundary conditions on the entire boundary of $\Omega$ are assumed).

As a preparation for the main theorem in this section, let $\notzee \subset C(\Omega)$ be the subspace of $u\in C(\Omega)$ that do not depend on $z$. For each member $u$ of $\notzee$ there are two continuous functions, say $u^-$ and $u^+$, on $\be$ such that 
\begin{align} u(x,y,z) &= u^-(x,y), \qquad (x,y)\in \be, z \in [-1,0-],\nonumber \\
 u(x,y,z) &= u^+(x,y), \qquad (x,y)\in \be, z \in [0+,1],\label{tmrr:1} \end{align}
and thus $u$ may be identified with $(u^-,u^+)\in C(\be) \times C(\be).$ In other words, $\notzee$ is isometrically isomorphic with the latter Cartesian product.   By the generation theorem from p. 369 in \cite{ethier}, the Neumann Laplace operator, say  $\Delta_{2D}$, is closable and its closure  $\overline {\Delta_{2D}}$ generates a Feller semigroup in $C(\be)$. Thus, by the Phillips perturbation theorem, the operator 
\[ \mathfrak B= \begin{pmatrix} \overline{\Delta_{2D}} & 0 \\ 0 & \overline{\Delta_{2D}}  \end{pmatrix} + \begin{pmatrix} - \alpha & \alpha \\ \beta & - \beta \end{pmatrix}, \] 
with domain $\dom{\overline{\Delta_{2D}}} \times \dom{\overline{\Delta_{2D}}}$ is a generator in $C(\be) \times C(\be)$. (Also, if the latter space is appropriately identified 
with the space of continuous functions on two copies of $\be$, the operator $\mathfrak B$ may be seen to be a conservative Feller generator.) Our main theorem says that this $\mathfrak B$ governs the limit evolution. 
 
\begin{thm} \label{mainth} We have, 
\begin{equation}\label{mainth:1} \grae \e^{t\onaeps } u = \e^{t\mathfrak B} \mc P_{p,q}u, \qquad u \in C(\Omega),t >0\end{equation}
where $\mc P_{p,q}$ is the projection on $\notzee$ is defined by  
\[ \mc P_{p,q} u = (u_1,u_2) \]
with \begin{align} u_1(x,y) &= pu(x,y,0-)+ (1-p) \int_{-1}^0 u(x,y,z)\ud z, \notag \\ u_2(x,y) &= qu(x,y,0+)+ (1-q) \int_{0}^1 u(x,y,z)\ud z , \qquad (x,y) \in \be . \label{rzutyprim} \end{align}
\end{thm}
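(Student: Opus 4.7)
My overall strategy is to apply the extended Trotter--Kato--Sova--Kurtz--Mackevi\v cius theorem for Feller semigroups with a (possibly degenerate) limit resolvent, in the form given in \cite{knigaz}*{Chapter 31}. Granted the generation assertions of Proposition \ref{tsgb:prop:1} for the $\onaeps$, and the fact that $\mathfrak B$ generates a Feller semigroup on $\notzee \simeq C(\be)\times C(\be)$ via a Phillips perturbation of $\overline{\Delta_{2D}}\oplus \overline{\Delta_{2D}}$, the task reduces to establishing the extended resolvent convergence
\[ \grae (\lam - \onaeps)^{-1}f = (\lam - \mathfrak B)^{-1}\mc P_{p,q} f, \qquad f \in C(\Omega), \]
strongly, for one (hence any) sufficiently large $\lam>0$. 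The cited theorem then delivers \eqref{mainth:1}, and the projection $\mc P_{p,q}$ appears as the natural pseudo-resolvent projection in the limit.

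\textbf{The resolvent calculation.} Set $u_\eps := (\lam - \onaeps)^{-1} f$. The PDE
\[\lam u_\eps - (\partial_x^2+\partial_y^2)u_\eps - \eps^{-2}\partial_z^2 u_\eps = f\]
rewrites as $\partial_z^2 u_\eps = \eps^2(\lam u_\eps - \Delta_{2D} u_\eps - f)$; combined with the Neumann conditions $\partial_z u_\eps(\cdot,\cdot,\pm 1)=0$, this will give the uniform bound $\|\partial_z u_\eps\|_\infty = O(\eps^2)$ on each of $\Omega^\pm$, so that $u_\eps$ is $O(\eps^2)$-close in $C(\Omega^\pm)$ to a $z$-independent function. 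Let $(V_\eps^-, V_\eps^+):= \mc P_{p,q} u_\eps$. Forming ``$p$ times the equation evaluated at $z=0-$ plus $(1-p)$ times its integral over $z\in[-1,0]$'' produces precisely the operator $p\partial_z^2+(1-p)\partial_z$ applied to $u_\eps$ at $z=0-$; by the first transmission condition in \eqref{C:eq.robin3}, this equals $\eps^2\alpha[\nu_{x,y}(u_\eps)-u_\eps(x,y,0-)]$, so the $\eps^{-2}$ prefactor cancels. Using the vertical flatness, the bracket equals $V_\eps^+ - V_\eps^- + O(\eps^2)$; together with the analogous identity on the upper side this yields
\[(\lam - \mathfrak B)\binom{V_\eps^-}{V_\eps^+} = \mc P_{p,q} f + O(\eps^2) \mquad{in} C(\be)\times C(\be).\]
Boundedness of $(\lam-\mathfrak B)^{-1}$ then gives $\mc P_{p,q}u_\eps \to (\lam-\mathfrak B)^{-1}\mc P_{p,q}f$, and the vertical flatness upgrades this to convergence of $u_\eps$ itself to the same limit in $C(\Omega)$, as required.

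\textbf{Principal obstacle.} The delicate step is the uniform $O(\eps^2)$ estimate on $\partial_z u_\eps$. Since $u_\eps$ lies only in the abstract Feller domain $\dom{\onaeps}$, a uniform sup-norm bound on $\Delta_{2D} u_\eps$ is not built in, and a direct integration of $\partial_z^2 u_\eps = \eps^2(\lam u_\eps - \Delta_{2D}u_\eps - f)$ looks circular. I would resolve this by a bootstrap: first restrict to $f$ for which $u_\eps$ lies in the classical subdomain $\dom{\naeps}$ --- where the $C^{2,\kappa}$ hypothesis on $\partial \be$ supplies genuine regularity in $(x,y)$ and $C^2$ regularity in $z$ --- carry out the derivation there, and then extend the conclusion to all of $C(\Omega)$ by density together with the uniform contractivity $\|(\lam-\onaeps)^{-1}\|\le \lam^{-1}$. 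The particular weighting in $\mc P_{p,q}$ is not a guess but is forced by this calculation: it is exactly the combination that makes $p\partial_z^2+(1-p)\partial_z$ cancel the $\eps^{-2}$-singularity through \eqref{C:eq.robin3} and thereby isolates a closed two-dimensional equation for the limit.
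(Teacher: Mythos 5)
Your route --- a direct singular-perturbation analysis of the resolvent equation for $\onaeps$, followed by a Trotter--Kato-type passage to the semigroups --- differs from the paper's, which exploits the tensor-product structure $C(\Omega)=C(\be)\tilde\otimes_\epsilon C(U)$ to factor $\e^{t\onaeps}$ as $\e^{t\ddwadd}\tilde\otimes_\epsilon\e^{tA^\eps}$ and thereby reduces the whole theorem to the one-dimensional vertical problem, whose resolvent is computed explicitly via Greiner's perturbation formula \eqref{maingen:2}. Your observation that the combination ``$p$ times the equation at $z=0-$ plus $(1-p)$ times its $z$-integral'' turns the transmission condition \eqref{C:eq.robin3} into the jump term and forces the weights in $\mc P_{p,q}$ is correct, and it is the same mechanism that drives the paper's vertical computation. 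But the proposal has two genuine gaps. The first is conceptual and fatal as written: in the degenerate situation \eqref{alt:2}, where the limit semigroup is strongly continuous only on the subspace $\notzee$, the resolvent convergence \eqref{alt:3} is \emph{necessary but not sufficient} for the semigroup convergence --- the paper says so explicitly in Section \ref{tvc}, and Chapter 31 of the cited monograph concerns the uniformly holomorphic case, not a general Feller setting. So ``the cited theorem then delivers \eqref{mainth:1}'' is unjustified. In $L^2$ the paper compensates with uniform sectoriality of the forms; in $C(\Omega)$ no such uniform holomorphy is available, and the paper instead verifies the extra hypothesis of Kurtz's singular convergence theorem (Theorem \ref{tkurtza}): $\grae\rez{\eps^2 A^\eps}=\rez{A_0}$ for a generator $A_0$ whose semigroup satisfies $\grat\e^{tA_0}=P_{p,q}$ (Theorem \ref{assbis}, resting on the spectral and irreducibility analysis of Theorem \ref{ass}). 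Your argument contains no substitute for this step, so even a complete resolvent computation would not yield \eqref{mainth:1}.

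The second gap is the one you flag yourself but do not close: the uniform estimate $\|\partial_z u_\eps\|_\infty=O(\eps^2)$ requires a bound on $\|\ddwad u_\eps\|_\infty$ that is uniform in $\eps$. Restricting $f$ to a core and then using density together with $\|(\lam-\onaeps)^{-1}\|\le\lam^{-1}$ gives, for each fixed $\eps$, classical regularity of $u_\eps$, but no $\eps$-independent control of its horizontal second derivatives; the circularity you identify is therefore not broken by the proposed bootstrap. The paper never needs such an a priori estimate: on simple tensors $f\otimes g$ the resolvent splits into a horizontal factor (untouched by $\eps$) and an explicitly solvable vertical factor, and linear density of simple tensors does the rest. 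To salvage your approach you would need either a proof of uniform holomorphy of $\sem{\onaeps}$ in $C(\Omega)$, or a verification of the Kurtz condition, and in addition an honest derivation of the vertical flattening estimate (for instance by a maximum-principle barrier in the $z$ variable), none of which is supplied.
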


The intuition behind this theorem is as follows. As $\eps \to 0$, diffusion in the vertical direction becomes faster and faster, and the solutions to \eqref{cauchy} become more and more flat in this direction. Therefore, in the limit they resemble functions of two variables, defined on the two sides of the separating membrane. On each of these sides, we have diffusion (with reflection on the boundary), and these sides communicate via jumps, as expressed in the second matrix  defining the operator $\mathfrak B$. 

Our choice of transmission conditions allows examining  the phrase `solutions became flat' more closely. The trick works in agreement with our intuition because fast diffusion averages out solutions in the vertical direction: but the formula for $\mc P_{p,q}$ given above makes it clear that there are several ways this averaging may take place. As long as we are facing an inadhesive membrane, functions are averaged by means of \eqref{rzuty}, but a sticky membrane leads to \eqref{rzutyprim}. The approximating scheme is robust in the sense that the limit process does not depend on the mechanism of filtering through the membrane.  Nevertheless, the averaging that leads to this process does, and so does the limit equation: depending on $p$ and $q$ different initial conditions are needed.  

On the other hand, neither the limit semigroup nor the projection depend on the measures $\mu$ and $\nu$. It is thus irrelevant whether after filtering through the membrane a Brownian particle restarts it chaotic movement close to the vicinity of the membrane, on its other side (though this is the most natural choice) or somewhere further away. This effect is not surprising in view of the averaging property of diffusion, discussed briefly above.

The rest of this section is devoted to a step-by-step proof of Theorem \ref{mainth}, intertwined with a similar proof of the generation result.

\subsection{A building block: a holomorphic, Feller semigroup in $[0,1]$ and its asymptotic behavior} 
\label{abb}
(Sections \ref{abb} -- \ref{tvc} are devoted to the vertical component of the main semigroup, and thus we should think of the related functions (arguments of the semigroup) as depending on the $z$ variable. However, in the following analysis it will be more convenient to use $x$ as a variable instead. We will come back to using the coordinates of the previous section in Section \ref{dowod}.)

Let $C[0,1]$ be the space of continuous functions on the unit interval $[0,1]$, and let $C^2[0,1]$ be its subspace composed of twice continuously differentiable functions. Moreover, let for any $r\in [0,1]$, the operator $G_r$ be given by  
\[ G_r f = f'' \]
on the domain composed of $f \in C^2[0,1]$ such that 
\begin{equation}\label{app:1} rf'' (0) - (1-r) f'(0)= 0 \mquad{and} f'(1) = 0. \end{equation}
As we shall see in this section, $G_r$ is a generator of a conservative Feller semigroup \sem{G_r} (\ie  of a strongly continuous semigroup of positive contractions such that $\e^{tG_r}1_{[0,1]}=1_{[0,1]},$ where $1_{[0,1]}(x) = 1, x \in [0,1]$)  in $C[0,1]$. 

The process related to $G_r$ is a Brownian motion on $[0,1]$ with reflecting barrier at $x=1$ and a sticky barrier at $x=0$ (see \cite{liggett} p. 127), trapping Brownian particles for `an infinitely short time' (if $r\not = 1$). The duration of the imprisonment of the particles at $x=0$ depends on $r$: for $r=1$ the particles are for ever trapped at $x=0$ and for $r=0$ they are reflected. For intermediate $r$ the measure of the set of  times when a particle starting at $x=0$ is at $x=0$ again is of positive Lebesgue measure, and this measure increases with $r$ (\cite{liggett} p. 128). 
In what follows, $r$ will be referred to as a stickiness coefficient.   

As it transpires, as $t\to \infty$, a statistical equilibrium is reached between the particles trapped at $x=0$ and those evenly distributed across $[0,1]$ by diffusion. This fact is expressed in the following formula: 
\begin{equation}\label{app:2} \grat \e^{tG_r} f = P_r f \end{equation}
where 
\begin{equation}\label{app:2'}  P_rf(x) =rf(0) +  (1-r)\int_0^1 f(y)\ud y, \qquad x \in [0,1].\end{equation} We prove \eqref{app:2} in Theorem \ref{ass}, further down. We start our analysis with the generation result. 

\begin{prop} \label{toiac} The operator $G_r, r \in [0,1]$ is a  conservative Feller generator. 
\end{prop}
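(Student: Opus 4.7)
The plan is to verify the three hypotheses of the Hille--Yosida--Ray theorem for Feller generators (see, e.g., \cite{ethier}, Chapter 4): (i) $D(G_r)$ is dense in $C[0,1]$, (ii) $G_r$ satisfies the positive maximum principle, and (iii) $(\lambda - G_r)D(G_r) = C[0,1]$ for some $\lambda > 0$. Conservativity is immediate, since $1_{[0,1]} \in D(G_r)$ and $G_r 1_{[0,1]} = 0$ yield $\e^{tG_r} 1_{[0,1]} = 1_{[0,1]}$. Density can be established by Weierstrass approximation combined with endpoint corrections supported in shrinking neighborhoods of $0$ and $1$: given $g \in C[0,1]$, a polynomial $p$ approximating $g$ is adjusted by a finite linear combination of cutoff-localized functions (such as $x\rho_\delta(x)$ near $0$ and $(x-1)\tilde\rho_\delta(x)$ near $1$) chosen to enforce the two or three linear constraints in \eqref{app:1}; the amplitudes of these corrections are $O(\delta)$, so the modified polynomial lies in $D(G_r)$ and stays uniformly close to $p$.

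\textbf{Positive maximum principle.} Suppose $f \in D(G_r)$ attains its positive maximum at $x_0 \in [0,1]$. If $x_0 \in (0,1)$ then $f''(x_0) \le 0$ by calculus. If $x_0 = 1$, then $f'(1) = 0$ by \eqref{app:1} and a Taylor expansion at $1$ gives $f''(1) \le 0$. At $x_0 = 0$ two sub-cases arise: when $r > 0$, I rewrite the boundary condition as $f''(0) = \frac{1-r}{r}\, f'(0)$ and note that $f'(0) \le 0$ since $0$ is a left-endpoint maximum, so that $f''(0) \le 0$; when $r = 0$, the boundary condition forces $f'(0) = 0$, and then $f(x) - f(0) = \tfrac12 f''(0) x^2 + o(x^2) \le 0$ near $0^+$ once more yields $f''(0) \le 0$. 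In all cases $(G_r f)(x_0) \le 0$, establishing the PMP.

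\textbf{Range and main obstacle.} For the range condition, given $g \in C[0,1]$ and $\lambda > 0$, I solve the boundary value problem $\lambda f - f'' = g$ subject to \eqref{app:1} by variation of parameters. Writing $\mu = \sqrt{\lambda}$, the homogeneous solutions are spanned by $e^{\mu x}, e^{-\mu x}$, and imposing \eqref{app:1} on $Ae^{\mu x}+Be^{-\mu x}$ yields a $2\times 2$ linear system whose determinant is, by direct computation, $2r\mu^3\cosh\mu + 2(1-r)\mu^2 \sinh\mu$, which is strictly positive for $\mu>0$ and $r \in [0,1]$. Hence the inhomogeneous problem has a unique $D(G_r)$-solution, giving surjectivity of $\lambda - G_r$; the PMP then supplies the contractive estimate $\|\lambda(\lambda - G_r)^{-1}\|\le 1$ and, together with density, implies that (the closure of) $G_r$ generates a Feller semigroup. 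The main obstacle is the density argument at the sticky endpoint, where the constraint involves $f''(0)$ rather than a first-derivative Wentzell-type condition: one must check that this extra second-order constraint can be prescribed by a small sup-norm perturbation independently of the first-derivative constraints, and the degenerate cases $r=0$ (no $f''(0)$-constraint) and $r=1$ (no $f'(0)$-constraint) have to be handled separately in the construction.
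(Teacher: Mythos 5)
Your proposal is correct and follows essentially the same route as the paper: conservativity from $1_{[0,1]}\in D(G_r)$ with $G_r 1_{[0,1]}=0$, the positive maximum principle (which the paper delegates to a cited argument), density, and the range condition verified by explicitly solving the two-point boundary value problem $\lambda f - f''=g$ — your determinant $2r\mu^3\cosh\mu+2(1-r)\mu^2\sinh\mu$ is, up to the factor $2\mu$ and a sign depending on row ordering, exactly the denominator of the constant $C$ in the paper's closed-form solution, and its nonvanishing is the same key point. The only differences are presentational (an $e^{\pm\mu x}$ basis versus a particular solution plus a $\cosh/\sinh$ correction, and a more detailed density construction than the paper bothers with), and since you prove surjectivity of $\lambda-G_r$ outright, $G_r$ is automatically closed, so the hedge "(the closure of) $G_r$" is unnecessary.
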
 
\begin{proof}The argument presented in \cite{knigaz} p. 17 shows that $G_r$ satisfies the positive maximum principle. It is also clear that $G_r$ is densely defined, that $1_{[0,1]}$ belongs to $\dom{G_r}$ and that $G_r 1_{[0,1]}= 0.$ Therefore, by the Hille--Yosida theorem for Feller semigroups (\cite{ethier}, Thm. 2.6 p. 13 and Thm. 2.2 p. 165, or \cite{kniga} Thm. 8.3.4, p. 328) it suffices to 
check the range condition: for any $g\in C[0,1]$ and $\lam >0$ there is an $f \in \dom{G_r}$ such that 
\begin{equation}\label{app:3} \lam f - G_r f = g .\end{equation}
(In particular, existence of solutions to the resolvent equation for one $\lam >0$ and all $g$ implies that $G_r$ is closed, see e.g. \cite{ethier} Lemma 2.2, p. 11.) 
To this end, we recall that $h = h_{\lam, g}$ defined by 
\[ h(x) = \frac 1{2\sqrt \lam } \int_0^1 \e^{-\sqrt \lam |x-y|} g(y) \ud y, \qquad x \in [0,1],  \]
belongs to $C^2[0,1]$ and satisfies $\lam h - h'' = g$. Therefore, for any constant $C$, the same is true about $f\in C^2[0,1]$ given by 
\begin{equation}\label{app:4} f(x) = C\cosh \sqrt \lam (1-x) - h(1) \sinh\sqrt \lam (1-x) + h(x) , \qquad x\in [0,1]. \end{equation}
Since $h'(1) = - \sqrt\lam h(1), $ we have $f'(1) = 0$. Moreover, the first condition in \eqref{app:1} is satisfied iff 
\begin{equation} \label{app:5} C= \frac {r[\lam h(1) \sinh \sqrt \lam + g(0) - \lam h(0)] + (1-r) [ h(1) \sqrt \lam \cosh \sqrt \lam + \sqrt \lam h(0)]}{\lam r \cosh \sqrt \lam + (1-r) \sqrt \lam \sinh \sqrt \lam }. \end{equation} 
Since $f$ with so-defined $C$ belongs to $\dom{G_r}$, we are done. \end{proof}

For the study of the asymptotic behavior of the semigroups generated by $G_r, r \in [0,1]$ we need a number of auxiliary results, presented below. 
The first of these reveals that each $\sem{G_r}$ is more regular than an ordinary Feller semigroup. 

\begin{prop}\label{cosinusy} Operators $G_r, r \in [0,1]$ are generators of strongly continuous cosine families in $C[0,1]$. 
\end{prop}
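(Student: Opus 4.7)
The plan is to verify the Sova--Fattorini characterization of cosine family generators using the explicit resolvent formula already obtained in the proof of Proposition \ref{toiac}. Recall that a closed operator $A$ on a Banach space generates a strongly continuous cosine family of type $(M,\omega)$ if and only if $\lambda^2 \in \rho(A)$ for all $\lambda > \omega$ and
$$\left\lVert \frac{d^n}{d\lambda^n}\bigl[\lambda(\lambda^2 - A)^{-1}\bigr]\right\rVert \le \frac{Mn!}{(\lambda - \omega)^{n+1}}, \qquad n \ge 0.$$

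Starting from formulas \eqref{app:4}--\eqref{app:5}, I would substitute $\lambda^2$ for $\lambda$ throughout, producing an explicit expression for $\lambda R(\lambda^2, G_r)g$. The denominator in \eqref{app:5} then becomes $\lambda^2 r\cosh\lambda + (1-r)\lambda\sinh\lambda$, whose asymptotic growth (like $\lambda^2 e^\lambda$ when $r>0$ and like $\lambda e^\lambda$ when $r=0$) dominates the exponentials $e^{\lambda(1-x)}$ arising from $\cosh\lambda(1-x)$ and $\sinh\lambda(1-x)$, as well as the convolution kernel $h(x) = \frac{1}{2\lambda}\int_0^1 e^{-\lambda |x-y|}g(y)\,\mathrm{d}y$. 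Taking $n$ derivatives in $\lambda$ introduces polynomial factors in $\lambda$ and powers of $(1-x)\in [0,1]$ and $|x-y|\in[0,1]$, but the leading exponential cancellation between numerator and denominator should preserve the required $n!/(\lambda-\omega)^{n+1}$ bound, with constants independent of $x$, $y$, and $g$.

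As a sanity check (and a first step) I would verify the result directly for $r = 0$, in which case $G_0$ is the Neumann Laplacian and generates the well-known cosine family $C_0(t)f(x) = \tfrac12(\tilde f(x+t)+\tilde f(x-t))$, where $\tilde f$ is the even, $2$-periodic extension of $f$ across both endpoints of $[0,1]$. For $r = 1$ the boundary condition degenerates to $f''(0) = 0$, $f'(1) = 0$, and the denominator simplifies to $\lambda^2 \cosh\lambda$, so the estimate is easy to read off directly. The main technical obstacle I foresee is verifying the Sova--Fattorini bounds uniformly in $r \in [0,1]$ with honest constants: ensuring that the denominator does not vanish on a suitable right half-plane and that the higher-order derivatives, which involve sums of products of binomial coefficients with decaying exponentials, can be summed cleanly.

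An alternative route that would sidestep part of this bookkeeping is Kisy\'nski's perturbation theorem for cosine families: starting from the known cosine family for $G_0$, one could try to treat $G_r - G_0$ as a (boundary) perturbation. Here, however, the perturbation changes the domain rather than being an additive bounded operator on $C[0,1]$, so it would have to be realized in a suitable extrapolation space, using the rank-one structure visible in the difference of the constants $C$ of \eqref{app:4} computed for different values of $r$. Once generation of the cosine family is established, the strong continuity on $[0,\infty)$ and the cosine functional equation follow at once from the corresponding properties of the Laplace transform $\lambda R(\lambda^2, G_r)$.
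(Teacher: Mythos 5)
There is a genuine gap at the decisive step of your primary route. The Sova--Fattorini characterization you quote is correct, but verifying the bound $\lVert \frac{d^n}{d\lambda^n}[\lambda(\lambda^2-G_r)^{-1}]\rVert \le Mn!/(\lambda-\omega)^{n+1}$ for \emph{all} $n$ directly from the closed-form resolvent is not a bookkeeping exercise: differentiating the quotient in \eqref{app:4}--\eqref{app:5} $n$ times produces, via Leibniz and Fa\`a di Bruno, a sum of order $n!$ many terms each involving powers of $1/m_{\lambda^2}(r)$ up to $n+1$, and the assertion that ``the leading exponential cancellation should preserve the required bound'' is precisely the statement to be proved, not an observation. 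In practice the only tractable way to obtain these all-order Widder-type estimates is to recognize $\lambda(\lambda^2-G_r)^{-1}$ as the Laplace transform $\int_0^\infty \e^{-\lambda t}C_r(t)\,\ud t$ of an already-exhibited bounded family, at which point the bounds follow by differentiating under the integral --- but that presupposes the cosine family, which is what you are trying to construct. Your fallback via Kisy\'nski-type perturbation fails for the reason you yourself note: $G_r-G_0$ is a domain perturbation, not a bounded additive one, and no rank-one extrapolation argument is supplied.

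The paper avoids this circularity by constructing the cosine family explicitly via the method of images: $f$ is extended beyond $x=1$ by even reflection and beyond $x=0$ by the $r$-dependent formula \eqref{ogag:2} (odd-type reflection plus an exponential convolution with rate $\kappa=(1-r)/r$), the extension is iterated to all of $\R$, and one sets $C_r(t)f(x)=\frac12(f(x+t)+f(x-t))$; invariance of $\dom{G_r}$ under $C_r(t)$ and the range condition from Proposition \ref{toiac} then identify the generator as $G_r$. (The paper also notes the result is a special case of Xiao--Liang.) This constructive route is not merely a convenience: the semi-explicit formula \eqref{cosinusy:1} is used again in part (iii) of the proof of Theorem \ref{ass}, where the antisymmetry of the $r=1$ extension shows that $C_0(0,1]$ is invariant. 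Your $r=0$ sanity check coincides with this construction, but for general $r$ your argument as written does not close.
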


\begin{proof} This is a particular instance of a theorem due to Xiao and Liang \cites{xiao03,xiao08}. Since the proof of this general theorem is quite involved, and our case is rather simple,  for completeness, we sketch a straightforward proof based on the method of images (see \cites{kosinusy,kelvin}). This method leads to a semi-explicit formula for the cosine family, and we will use this formula later.  

Given a continuous function $f$ on $[0,1]$ we extend it to the interval $[0,2]$ by symmetry about $x=1$, by defining \begin{equation} \label{ogag:1} f(x) = f(2-x)\end{equation} 
for $ x \in [1,2].$
If $f$ is twice continuously differentiable on $[0,1]$ and $f'(1)=0$, the so-extended function is twice continuously differentiable on $[0,2]$.    
Next, if $r\in [0,1)$, we extend $f$ to $[-2,2]$ by agreeing that  (comp. \cite{kosinusy}, eq. (2.4))
\begin{equation}\label{ogag:2} f(-x) = 2\e^{-\kappa t}f(0)+ \kappa \int_0^x \e^{-\kappa (x-y)} f(y) \ud y - f(x),\end{equation}
for $ x\in [0,2]$,
where $\kappa \coloneqq \frac{1-r}r$,  and note that for $r=1$ this is an odd extension of $f$:   we have \[f(-x)=2f(0)-f(x)\] (comp. \cite{liggett} p. 125). If $r=0$, we take $f(-x)=f(x)$ (symmetry about $x=0$). These extensions are chosen so that $f$ is twice continuously differentiable provided $f\in \dom{G_r}$ (see \cite{kosinusy} pp. 667 and 674). Having defined (an extension of) $f$ on $[-2,2]$ we may extend its definition to $[-2,4]$ by formula \eqref{ogag:1}, and then again to $[-4,4]$ by formula \eqref{ogag:2}. Continuing this procedure of repeated reflections (see \cite{feller} p. 341 or \cite{bobgremur}) we construct a twice continuously differentiable function on the entire line such that \eqref{ogag:1} and \eqref{ogag:2} are true for all $x\ge 1$ and all $x\ge 0$, respectively.    
This allows defining the family $(C_r(t))_{t\in \R}$ of operators in $C[0,1]$ by 
\begin{equation}\label{cosinusy:1}
C_r(t)f(x) = \frac 12 (f(x+t) + f(x-t) ), \qquad x \in [0,1], t \in \R;\end{equation}
(note that the extension of $f$ depends on $r$ and so do these operators). 
The extension of $f$ is chosen in such a way that $C_r(t)f \in \dom{G_r}$ for all $t\in \R$ provided $f\in \dom{G_r}$. It follows that $(C_r(t))_{t\in \R}$ is a cosine family (see \cite{kosinusy} for details). Moreover, since for $f\in \dom{G_r}$, the extension of $f$ constructed above is twice continuously differentiable on $\R$, 
$\lim_{t\to 0} \frac {2(C_r(t)f - f)}{t^2} = f''$ for $f \in \dom{G_r}$.   Therefore, the generator of $(C_r(t))_{t\in \R}$ extends $G_r$. However, since (by Proposition  \ref{toiac}) the range of $\lam - G_r$ is the entire $C[0,1]$ no cosine family generator can be a proper extension of $G_r$, and we conclude that the generator of $(C_r(t))_{t\in \R}$ is $G_r$. 
\end{proof}

\begin{lem}\label{irr} Let $r \in [0,1)$. Then, the semigroup $\sem{G_r}$ is irreducible: for any $\lam >0$ and $g\ge 0$ the solution to the resolvent equation \eqref{app:3} is strictly positive. 
\end{lem}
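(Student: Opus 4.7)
My strategy is to construct the resolvent $(\lam - G_r)^{-1}$ as an integral operator against an explicit, strictly positive Green kernel on $[0,1]\times[0,1]$. To this end, I would first pick two fundamental solutions of the homogeneous equation $u'' = \lam u$, chosen so that one meets the left-hand (sticky) boundary condition in \eqref{app:1} and the other meets the Neumann condition at $x=1$. A concrete choice is
\[
u_1(x) := (1-r)\cosh \slam\, x + r\slam \sinh \slam\, x \mquad{and} u_2(x) := \cosh \slam (1-x).
\]
A direct calculation shows that $r u_1''(0) - (1-r) u_1'(0) = 0$ and $u_2'(1) = 0$. Because $r \in [0,1)$ forces $1-r > 0$, the function $u_1$ is a sum of a strictly positive term and a non-negative one, so $u_1 > 0$ on $[0,1]$; clearly also $u_2 > 0$ on $[0,1]$. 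Since both solve a second-order linear ODE with constant leading coefficient, their Wronskian $W = u_1 u_2' - u_1' u_2$ is constant, and evaluated at $x=1$ it reduces to $W = -u_1'(1) = -\lrb{(1-r)\slam \sinh\slam + r\lam \cosh \slam} < 0$.

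Next I would assemble the Green function
\[
\mathcal G(x,y) := -\frac{1}{W}\, u_1(x\wedge y)\, u_2(x\vee y), \qquad x,y \in [0,1],
\]
and verify, in the standard way (continuity across the diagonal, plus the derivative jump $\partial_x \mathcal G(y^+,y) - \partial_x \mathcal G(y^-,y) = -1$), that $f(x) := \int_0^1 \mathcal G(x,y) g(y) \ud y$ lies in $\dom{G_r}$ and satisfies $\lam f - G_r f = g$. By the uniqueness of solutions to the resolvent equation (which is implicit in the proof of Proposition \ref{toiac}, since $G_r$ is a semigroup generator), this $f$ is the one sought in \eqref{app:3}. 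Because $-W^{-1} > 0$ and $u_1, u_2 > 0$ on $[0,1]$, the kernel $\mathcal G$ is strictly positive on the whole square, so the strict positivity of $f$ for any nontrivial $g \ge 0$ is immediate.

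The argument is essentially routine once the two fundamental solutions are in hand; the one point that deserves attention -- and the reason the case $r=1$ must be excluded -- is that the assumption $r < 1$ is precisely what secures $u_1(0) = 1-r > 0$, and hence the strict positivity of $u_1$ at the sticky endpoint. If $r = 1$ the boundary condition at $0$ degenerates to $u_1''(0)=0$, forcing $u_1(0)=0$, and irreducibility genuinely fails, in agreement with the probabilistic picture that a particle started at $x=0$ is then trapped there forever.
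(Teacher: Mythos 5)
Your Green-function strategy is more direct than the paper's proof (which compares $\sem{G_r}$ with the minimal process killed at $x=0$, bounds the resolvent from below by the killed resolvent $f_0$, and checks positivity of the explicit kernel $k_\lam$ of the latter), but as written it has a genuine gap at the left endpoint. The condition $rf''(0)-(1-r)f'(0)=0$ defining $\dom{G_r}$ is of Wentzell type: it involves $f''(0)$, and for a solution of the \emph{inhomogeneous} equation $\lam f-f''=g$ one has $f''(0)=\lam f(0)-g(0)$, so membership in $\dom{G_r}$ is equivalent to the $g$-dependent condition $r\lam f(0)-(1-r)f'(0)=r\,g(0)$. Your $u_1$ satisfies $r\lam u_1(0)-(1-r)u_1'(0)=0$, which is the correct condition only for the homogeneous equation $u_1''=\lam u_1$. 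Consequently the function $f=\int_0^1\mathcal G(\cdot,y)g(y)\ud y$ satisfies $r\lam f(0)-(1-r)f'(0)=0$, hence
\[ rf''(0)-(1-r)f'(0)=-r\,g(0), \]
which is nonzero whenever $r\in(0,1)$ and $g(0)>0$. So this $f$ does not lie in $\dom{G_r}$, it is not the solution of \eqref{app:3}, and the uniqueness argument cannot be invoked. (For $r=0$ the condition is genuinely Neumann and your argument is complete as stated.)

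The gap is repairable without losing the conclusion. Adding a multiple of $u_2$ preserves the equation and the Neumann condition at $x=1$, and shifts the left boundary functional by $c\bigl(r\lam u_2(0)-(1-r)u_2'(0)\bigr)=c\,m_\lam(r)$ with $m_\lam(r)=r\lam\cosh\slam+(1-r)\slam\sinh\slam=-W>0$. Hence the true resolvent is
\[ \rez{G_r}g=\int_0^1\mathcal G(\cdot,y)g(y)\ud y+\frac{r\,g(0)}{m_\lam(r)}\,u_2, \]
and since the correction term is non-negative for $g\ge 0$ and $u_2>0$, the corrected solution dominates $\int_0^1\mathcal G(\cdot,y)g(y)\ud y$, whose strict positivity your kernel argument does establish (here $u_1(0)=1-r>0$ is exactly where $r<1$ enters). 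This matches the paper's formulas \eqref{app:4}--\eqref{app:5}, whose constant $C$ carries the term $rg(0)$ in its numerator for precisely this reason. With that correction, your route is valid and arguably cleaner than the comparison argument in the paper, at the cost of having to handle the Wentzell condition explicitly.
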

\begin{proof} The idea lying behind the following proof is that the transition probabilities of the process related to $\sem{G_r}$ are larger than those of the minimal process in which a particle reaching $x=0$ is killed and removed from the state space.  Nevertheless, the argument is purely `analytic'. 

As a bit of algebra shows, 
\[ \frac {r[x h(1) \sinh x - x h(0)] + (1-r) [ h(1) \cosh x + h(0)]}{x r \cosh x + (1-r)  \sinh x}> \frac {h(1)\sinh x - h(0)}{\cosh x },\]
for all $x>0$ and $r \in [0,1)$ (for $r=1$ this turns into equality). Hence, even if $g(0)=0$, $f$ defined by \eqref{app:4} and \eqref{app:5} satisfies $f(0)>0$. By the same token, $C$ of \eqref{app:5} is larger than 
\[ C_0 \coloneqq \frac {h(1)\sinh \sqrt \lam  - h(0)}{\cosh \sqrt \lam },\]
and thus $f$ is larger than $f_0$, where $f_0$ is defined by \eqref{app:4} with $C$ replaced by $C_0$. 

To show that $f_0(x) >0 $ for all $x \in (0,1]$ we first note that 
\[ f_0(x) = \frac {\sinh \sqrt \lam  x}{\cosh \sqrt \lam } h(1) - \frac{\cosh \sqrt \lam (1- x)}{\cosh \sqrt \lam } h(0) + h(x). \] 
By the definition of $h$ it follows that
\[ f_0(x) = \frac 1{4\sqrt \lam \cosh \sqrt \lam} 
\int_0^1 k_\lam (x,y) g(y) \ud y ,\]
where 
\begin{align*} 
k_\lam (x,y)&=2\cosh\sqrt \lam \e^{-\sqrt \lam |x-y|} + 2\sinh \sqrt \lam  x \e^{\sqrt \lam (y-1)} \\\ &\phantom{=}- 2\cosh \sqrt \lam (1- x) \e^{-\sqrt \lam y}\\
&=\e^{-\sqrt \lam (|x-y|+1)} + \e^{-\sqrt \lam (|x-y|-1)} + \e^{\sqrt \lam (x+y-1)}\\ &\phantom{=} - \e^{\sqrt \lam (y-x-1)} - \e^{\sqrt \lam (1-x-y)} - \e^{\sqrt \lam (x-1-y)}. \end{align*}
For $y\le x$, this expression reduces to 
\[ \e^{\sqrt \lam (1-x+y)} + \e^{\sqrt \lam (x+y-1)} - \e^{\sqrt \lam (1-x-y)} - \e^{\sqrt \lam (x-1-y)} \ge 0 \]
with equality holding only if $y=0$. Analogously, for $0<x < y \le 1$, it reduces to
\[ \e^{\sqrt \lam (1-y+x)} + \e^{\sqrt \lam (x+y-1)} - \e^{\sqrt \lam (y-x-1)} - \e^{\sqrt \lam (1-x-y)}>0\]
(since $x>0$). This shows that for each $x\in (0,1]$ the function $(0,1]\ni y \mapsto k_\lam (x,y)$ is continuous and strictly positive. Therefore, $f_0(x)>0$ for $x \in (0,1]$, and the proof is complete.  
 \end{proof}
 
\begin{lem}\label{lem:3} The domain $\dom{G_r}$, when equipped with the graph norm $\|f\|_{G_r}= \|f\|+\|f''\|$ where $\|\cdot \|$ is the norm in $C[0,1]$, embeds compactly into $C[0,1]$. \end{lem}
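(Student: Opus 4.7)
The plan is to reduce the statement to a direct application of the Arzelà--Ascoli theorem by exhibiting a uniform Lipschitz bound on any $\|\cdot\|_{G_r}$-bounded family in $\dom{G_r}$.

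First I would note that every $f\in \dom{G_r}$ is of class $C^2$ and satisfies the Neumann-type boundary condition $f'(1)=0$. Consequently, for all $x\in[0,1]$,
\[ f'(x) = -\int_x^1 f''(s)\,\ud s,\]
so that $\|f'\|_\infty \le \|f''\|_\infty \le \|f\|_{G_r}$. (If one prefers not to invoke the boundary condition at $1$, the same bound, up to a harmless multiplicative constant, follows from the identity $f(1)-f(0)=\int_0^1 f'(s)\,\ud s$ combined with the mean value theorem: some $\xi$ satisfies $f'(\xi)=f(1)-f(0)$, and then $\|f'\|_\infty\le 2\|f\|_\infty+\|f''\|_\infty$.)

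Next, let $(f_n)_{n\ge 1}\subset \dom{G_r}$ be a bounded sequence in the graph norm, say $\|f_n\|+\|f_n''\|\le M$ for all $n$. By the preceding step, $\|f_n'\|_\infty\le M$ as well, which means that the family $(f_n)$ is uniformly bounded and uniformly Lipschitz (with Lipschitz constant at most $M$). In particular, the family is equicontinuous on the compact interval $[0,1]$.

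By the Arzelà--Ascoli theorem, some subsequence $(f_{n_k})$ converges uniformly on $[0,1]$, i.e., in the norm of $C[0,1]$. This proves that the inclusion $(\dom{G_r},\|\cdot\|_{G_r})\hookrightarrow C[0,1]$ maps bounded sets to relatively compact sets, which is the desired compact embedding. I do not anticipate any real obstacle here; the only small point to make explicit is that the boundary condition $f'(1)=0$ (or, alternatively, just the mean value theorem applied to $f$) is what converts control on the second derivative into control on the first derivative and hence into equicontinuity.
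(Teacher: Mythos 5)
Your proof is correct and follows essentially the same route as the paper: both use the boundary condition $f'(1)=0$ to convert the bound on $f''$ into a uniform bound on $f'$ (the paper writes $f'(0)=-\int_0^1 f''$ and then integrates up; you write $f'(x)=-\int_x^1 f''$ directly), deduce equicontinuity, and conclude by Arzel\`a--Ascoli. No issues.
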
 
\begin{proof} We are to prove that the unit ball in $\dom{G_r}$, when considered as a subset of $C[0,1]$ is relatively compact. To this end we note that members $f$ of this ball satisfy 
\[ \|f\|+\|f''\|\le 1 \mquad { and } f(x) = f(0) + f'(0)x + \int_0^x \int_0^y f''(z) \ud z \ud y , x\in [0,1],\]  
where $f'(0)= -\int_0^1 f''(y) \ud y$ (by the second part of the boundary conditions \eqref{app:1}). It follows that $|f'(0)|\le 1$ and then that  $|f(x)-f(y)|\le 2|x - y|, x,y\in [0,1]$ and thus these functions are equicontinuous. Hence, we are done by the 
Arzel\'a–Ascoli theorem.
\end{proof}

\begin{thm}\label{ass}There are positive constants $K$ and $\omega$ (depending perhaps on $r$) such that 
\begin{equation}\label{ass:1} 
 \|\e^{tG_r} - P_r\| \le K\e^{-\omega t}, \qquad t \ge  0, \end{equation}
where $P_r$ is defined in \eqref{app:2'}.  \end{thm}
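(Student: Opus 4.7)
The plan is to identify $P_r$ as the Riesz spectral projection of $G_r$ at the simple eigenvalue $0$ and to combine the spectral gap coming from compactness of the resolvent (Lemma~\ref{lem:3}) with holomorphy of the semigroup (a consequence of Proposition~\ref{cosinusy} via subordination). A direct inspection of the formula for $P_r$ shows that it is a bounded projection with range $\mathrm{span}\{\mathbf 1_{[0,1]}\} \subseteq \ker G_r$. The key identity is
\[
P_r G_r f = r f''(0) + (1-r)\bigl[f'(1) - f'(0)\bigr] = r f''(0) - (1-r) f'(0) = 0,
\]
obtained by integrating and using both boundary conditions in \eqref{app:1}. Together with $\e^{tG_r} \mathbf 1_{[0,1]} = \mathbf 1_{[0,1]}$, this gives $P_r \e^{tG_r} = P_r = \e^{tG_r} P_r$, so that $Q(t) := \e^{tG_r} - P_r$ defines a strongly continuous semigroup on the closed complement $E_r := \ker P_r$, and the desired inequality reduces to a uniform exponential decay bound for $Q(t)$.

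Next, by Lemma~\ref{lem:3}, $\sigma(G_r)$ is discrete. Solving $f'' = 0$ under the boundary conditions gives $\ker G_r = \mathrm{span}\{\mathbf 1_{[0,1]}\}$; solving $f'' = c \mathbf 1_{[0,1]}$ shows that no generalized eigenfunction exists (the compatibility at $x = 0$ forces $c = 0$), so $0$ is algebraically simple and $P_r$, being a projection onto $\ker G_r$ that commutes with $G_r$, must equal the Riesz projection at $\lambda = 0$. Contractivity of the semigroup puts $\sigma(G_r) \subseteq \{\Re\lambda \le 0\}$; writing $\lambda = \mu^2$, the remaining eigenvalues are the zeros of the entire function $r\mu\cosh\mu + (1-r)\sinh\mu$. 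A case analysis ($r = 0$, $r = 1$, $r \in (0,1)$) of this transcendental equation shows that its nonzero zeros lie on the imaginary axis, hence $\sigma(G_r) \setminus \{0\} = \{-\omega_n^2\}_{n \ge 1}$ with $0 < \omega_1 < \omega_2 < \cdots \to \infty$, yielding a spectral gap $\omega := \omega_1^2 > 0$.

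Finally, since $G_r$ generates a cosine family by Proposition~\ref{cosinusy}, it also generates a bounded holomorphic semigroup (a standard subordination argument). For holomorphic (indeed, for any eventually norm-continuous) semigroups the growth bound equals the spectral bound, so the part of $G_r$ in $E_r$ generates a semigroup of growth bound at most $-\omega$; this yields $\|Q(t)\| \le K\e^{-\omega' t}$ for any $\omega' < \omega$ and an appropriate constant $K = K(\omega')$. Alternatively, one may represent $Q(t)$ as a Dunford integral along a contour in $\{\Re\lambda \le -\omega + \delta\}$ and estimate it directly using the resolvent bound for sectorial operators. The main obstacle is the spectral-gap step: ruling out nonzero roots of the characteristic equation with non-negative real part, and excluding clustering of the spectrum toward the imaginary axis, requires a careful case analysis, especially for the intermediate values $r \in (0,1)$; once the gap is secured, the passage to uniform exponential decay is routine.
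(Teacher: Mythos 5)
Your strategy---identify $P_r$ with the Riesz projection at the isolated, algebraically simple eigenvalue $0$, and combine the compact resolvent (Lemma \ref{lem:3}) and holomorphy of the semigroup (obtained from Proposition \ref{cosinusy} via the Weierstrass formula) with a spectral gap---is genuinely different from the paper's and is viable, but as written it has a hole exactly where you yourself locate ``the main obstacle'': the assertion that all nonzero zeros of $\mu\mapsto r\mu\cosh\mu+(1-r)\sinh\mu$ are purely imaginary, so that $\sigma(G_r)\setminus\{0\}\subset(-\infty,0)$, is never proved. Since the whole estimate \eqref{ass:1} rests on this spectral gap, deferring it to an unspecified ``case analysis'' leaves the proof incomplete at its decisive step. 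The gap is, however, fillable by an elementary computation that avoids the transcendental equation altogether: if $f''=\lam f$ with $f'(1)=0$ and $r\lam f(0)=(1-r)f'(0)$ (the boundary condition \eqref{app:1} rewritten using $f''(0)=\lam f(0)$; assume $r<1$, the case $r=1$ being simpler since there $\lam\neq 0$ forces $f(0)=0$), then integration by parts yields
\begin{equation*}
\lam\left(\int_0^1|f|^2\ud x+\frac{r}{1-r}\,|f(0)|^2\right)=-\int_0^1|f'|^2\ud x,
\end{equation*}
so every eigenvalue is real and nonpositive, and $\lam=0$ forces $f$ constant. Combined with discreteness of the spectrum (no finite accumulation points, by compactness of the resolvent), this gives the gap $\omega=\omega_1^2>0$, and the remainder of your argument (spectral mapping for immediately norm-continuous semigroups, or a Dunford integral) is then routine. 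Note also that ``excluding clustering of the spectrum toward the imaginary axis'' is automatic once the resolvent is compact, and that your preliminary identities ($P_rG_rf=0$, hence $P_r\e^{tG_r}=\e^{tG_r}P_r=P_r$, and the algebraic simplicity of $0$) are all correct.

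For comparison, the paper takes a positivity route rather than a spectral one: for $r\in(0,1)$ it combines compactness of the semigroup and holomorphy with irreducibility (Lemma \ref{irr}) and invokes the Perron--Frobenius-type theorem from Section 3.5.1 of the reference \texttt{arendtsur}, which delivers convergence to a rank-one projection at an exponential rate without ever locating the spectrum; the limit projection is then identified as $P_r$ through the Abel limit $\lim_{\lam\to 0}\lam\rez{G_r}$ computed from the explicit resolvent formula \eqref{app:4}--\eqref{app:5}. The case $r=1$, where irreducibility fails, is treated separately by restricting to $C_0(0,1]$ and showing the spectral bound there is negative. Your approach, once the eigenvalue localization is supplied, is uniform in $r$ (no irreducibility dichotomy) and identifies the sharp decay rate $\omega_1^2$; the paper's approach trades that explicitness for the convenience of not having to analyze the characteristic equation at all.
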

\begin{proof} \ \ 
(i) The case $r=0$ is well-known (see e.g. \cite{knigaz} pp. 177-180). 

(ii) The case $r\in (0,1).$ Since $\dom{G_r}$ embeds compactly into $C[0,1]$ (by Lemma \ref{lem:3}), the resolvent operators $\rez{G_r}, \lam >0$ are compact (\cite{engel} p. 117). Also, since $G_r$ generates a cosine family (by Proposition \ref{cosinusy}), the Weierstrass formula implies that $\sem{G_r}$ may be extended to a holomorphic semigroup (of angle $\pi/2$, in the space of complex functions on $[0,1]$) -- see e.g. \cite{abhn} pp. 219--220. It follows that \sem{G_r} is immediately norm continuous (\ie $\lim_{s\to t} \|\e^{sG_r} - \e^{tG_r} \| =0, t >0$) -- this may be seen e.g. by combining Lemma 4.2 p. 52 and Theorem 5.2 (point (d)) p. 62 in \cite{pazy}. This together with compactness of the resolvent operators implies that also $\e^{tG_r}, t >0$ are compact (see \cite{engel} p. 117 or \cite{pazy} p. 48).  Finally,  by Lemma \ref{lem:3}, \sem{G_r} is  irreducible.

Therefore, all assumptions of the theorem in Section 3.5.1 of \cite{arendtsur} are satisfied. It follows that (i) the spectral bound \[ s(G_r) = \sup \{ \Re \lam: \lam \in \sigma (G_r)\},\]
where $\sigma(G_r)$ is the spectrum of $G_r$, is larger than $-\infty$, and (ii) there are positive constants $K$ and $\omega$ and  a non-zero operator $P_r$ such that 
\begin{equation}\label{ass:2}  \|\e^{-s(G_r) t}\e^{tG_r} - P_r \|\le K \e^{-\omega t}, \qquad  t\ge  0.\end{equation}
Since \sem{G_r} is a contraction semigroup, $s(G_r)\le 0$, and since $\e^{tG_r} 1_{[0,1]}=1_{[0,1]},$ $s(G_r)$ cannot be strictly negative.  Hence, $s(G_r) =0$ and to prove \eqref{ass:1} we only need to show that $P_r$ in \eqref{ass:2} is given by \eqref{app:2'}. 

To this end, recall that existence of the limit $\grat \e^{tG_r}g, g \in C[0,1]$ implies existence of $\lim_{\lam \to 0} \lam \rez{G_r}g $ and the two then coincide. Moreover, a limit in the norm, when it exists, 
must of course coincide with the pointwise limit. 
On the other hand, $\lim_{\lam \to 0} \lam \rez{G_r}g(x)$ is easy to calculate, since we know the exact form of  $f(x) = \rez{G_r}g(x)$; it is given in \eqref{app:3} and \eqref{app:4}. Namely, it is easy to see that $\lim_{\lam \to 0} \lam \rez{G_r}g(x) = \lim_{\lam \to 0} \lam C$ for the $C=C_{\lam,g}$  of \eqref{app:4}. Moreover, when multiplied by $\lam $ this $C$ converges to $rg(0)+ (1-r)\int_0^1g(x) \ud x$, as $\lam \to 0$ (note that $h$ appearing in the definition of $C$ also depends on $\lam$). This completes the proof. 

(iii) In the case $r=1$, the semigroup $\sem{G_r}$ is not irreducible (see the proof of Lemma \ref{irr} -- the solution $f$ to the resolvent equation equals $0$ at $x=0$ as long as $g(0)=0$), and we need to proceed differently. Fortunately, the very fact that \sem{G_1} is not irreducible suggests a different line of attack. The cosine family $\left (C_1(t)\right )_{t\in \R}$ 
constructed in Proposition \ref{cosinusy} leaves the subspace $C_0(0,1]=\{f \in C[0,1]; f(0)=0\}$ of $C[0,1]$ invariant: if $f(0)=0$ then $C_1(t)f (0)=0$ for all $t\in \R$, because the graph of the extension of $f$ featuring in \eqref{cosinusy:1} is antisymmetric about $x=0$. The generator, say $G_1^0$, of  the restriction of $\left (C_1(t)\right )_{t\in \R}$ (and of the restriction of $\sem{G_1}$) to $C_0(0,1]$ is the part of $G_1$ in this subspace, \ie $G_1^0$ is the operator of the second derivative on the domain \(\dom{G_1^0}= \{f\in C_0(0,1]\cap C^2[0,1], f'' \in C_0(0,1]\}\) or, equivalently, $\dom{G_1^0}= \{f\in C_0(0,1]\cap C^2[0,1], f''(0)= 0\}.\)  Also, for any $g\in C_0(0,1]$ the function $f(x) = \int_0^x \int_0^y g(z) \ud z \ud y , x \in [0,1]$ belongs to $\dom{G_1^0}$ and we have $G_1^0 f=g.$ It follows that $0$ belongs to the resolvent set of $G_1^0$, and, since \sem{G_1^0} is a positive contraction semigroup, Proposition 3.11.2 in \cite{abhn} implies that $s(G_1^0)<0.$ Therefore, see \cite{arendtsur} p. 13, there are positive constants $K$ and $\omega $ such that 
\[ \|\e^{tG_1^0} \|_{\mathcal L (C_0(0,1])} \le K \e^{-\omega t}, \qquad t \ge 0.  \]
On the other hand, $\sem{G_1}$ being conservative, given $f \in C[0,1]$ we may consider $f_0 \coloneqq f - f(0)1_{[0,1]} \in C_0(0,1]$ and write 
\[ \|\e^{tG_1} f - f(0)1_{[0,1]}\|=\| \e^{tG_1} f_0 \| = \| \e^{tG_1^0} f_0 \| \le K \e^{-\omega t} \|f_0\| \le 2 K \e^{-\omega t} \|f\|.\]
This completes the proof. \end{proof}

Before completing this section, we note that $C[0,1]$ is isometrically isomorphic to $C[-1,0]$, the space of continuous functions on $[-1,0]$, with positivity preserving isometric isomorphism $I: C[-1,0] \to C[0,1]$ given by $If(x)=f(-x), x\in [0,1]$. The operator $G_r^I$ given by $G_r^I f = f''$ on the domain composed of twice continuously differentiable functions $f \in C[-1,0]$ such that 
\begin{equation}\label{app:1'} rf'' (0) + (1-r) f'(0)= 0 \mquad{and} f'(-1) = 0 \end{equation} 
is the image of $G_r$ in $C[-1,0]$. This is to say that $f $ belongs to $\dom{G_r^I}$ iff $If $ belongs to $\dom{G_r}$ and we have $G_r I f = I G_r^I f .$ It follows that $\sem{G_r^I}$   
mirrors properties of $\sem{G_r}$.  Therefore, as a corollary to Proposition \ref{toiac} and Theorem \ref{ass} we obtain the following result. 

\begin{thm}\label{assprim} For any $r\in [0,1]$, the operator $G_r^I$ is a Feller generator in $C[-1,0]$. Moreover, there are positive constants $K$ and $\omega$ (depending perhaps on $r$) such that 
\begin{equation}\label{assprim:1} 
 \|\e^{tG_r^I} - P_r^I\| \le K\e^{-\omega t}, \qquad t \ge  0, \end{equation}
where $P_r^I= I^{-1} P_r I$ for $P_r$ defined in \eqref{app:2'}, \ie 
\[ P_r^I f (x) = rf(0) + (1-r) \int_{-1}^0 f(y) \ud y, \qquad x \in [-1,0].\]   \end{thm}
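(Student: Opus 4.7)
The plan is to reduce everything to Proposition \ref{toiac} and Theorem \ref{ass} by exploiting the isometric isomorphism $I\colon C[-1,0]\to C[0,1]$, $If(x)=f(-x)$, as already suggested in the paragraph immediately preceding the statement. The first step is to verify rigorously that $G_r^I = I^{-1}G_r I$ as unbounded operators, including the equality of domains. Here one checks that the chain-rule sign flip turns the Neumann-type condition $f'(1)=0$ for elements of $\dom{G_r}$ into $f'(-1)=0$ for the pullback, while the mixed condition $rf''(0)-(1-r)f'(0)=0$ becomes $rf''(0)+(1-r)f'(0)=0$, matching \eqref{app:1'}. Since $I$ preserves $C^2$-regularity, the domains match precisely.

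Given that identification, the semigroup generated by $G_r^I$ is obtained by similarity: $\e^{tG_r^I}=I^{-1}\e^{tG_r}I$. Because $I$ and $I^{-1}$ are positivity-preserving isometries sending $1_{[0,1]}$ to $1_{[-1,0]}$ and back, this similarity immediately transfers each feature established in Proposition \ref{toiac}: strong continuity, contractivity, positivity, and the conservativity $\e^{tG_r^I}1_{[-1,0]}=1_{[-1,0]}$. Thus $G_r^I$ is a conservative Feller generator in $C[-1,0]$.

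For the convergence bound, I would simply conjugate the estimate from Theorem \ref{ass}. With $P_r^I := I^{-1}P_r I$, one has
\begin{equation*}
\|\e^{tG_r^I}-P_r^I\|_{\mathcal L(C[-1,0])} = \|I^{-1}(\e^{tG_r}-P_r)I\|_{\mathcal L(C[-1,0])} = \|\e^{tG_r}-P_r\|_{\mathcal L(C[0,1])} \le K\e^{-\omega t},
\end{equation*}
since $I$ is an isometric isomorphism. A short computation then confirms that the operator $I^{-1}P_r I$ takes the advertised form: for $f\in C[-1,0]$ and $x\in[-1,0]$,
\begin{equation*}
P_r^I f(x) = (P_r(If))(-x) = r(If)(0) + (1-r)\int_0^1 f(-y)\ud y = rf(0)+(1-r)\int_{-1}^0 f(y)\ud y,
\end{equation*}
by the change of variable $y\mapsto -y$ in the integral.

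There is no real obstacle here; the only thing to be careful about is bookkeeping of the sign in the boundary condition at $0$ under the reflection, so that \eqref{app:1'} is seen to be the exact image of \eqref{app:1}. Everything else is automatic from the isometric similarity.
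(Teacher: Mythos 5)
Your proposal is correct and is essentially the argument the paper itself uses: the theorem is stated there as a corollary of Proposition \ref{toiac} and Theorem \ref{ass} via exactly this conjugation by the positivity-preserving isometric isomorphism $I$, with the domain correspondence \eqref{app:1}$\leftrightarrow$\eqref{app:1'} noted in the paragraph preceding the statement. Your write-up merely supplies the routine details (the sign bookkeeping under reflection, the similarity of the semigroups, and the explicit computation of $I^{-1}P_rI$) that the paper leaves implicit.
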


\subsection{The vertical component: a generation theorem}\label{tvc:agt} 

\subsubsection{Sticky membrane at $x=0$; no communication between the intervals $[-1,0-]$ and $[0+,1]$} Let $C(U)$ be the space of continuous functions on the union $U$ of two unit intervals $U:= [-1,0-]\cup [0+,1]$. Here, similarly as before, we imagine that there is an infinitely thin membrane at $x=0$ and think of $0-$ and $0+$ as the points to the immediate left and to the immediate right of this membrane, respectively.

Each element $f$ of $C(U)$ may be thought of as the sum of $f_1\in C[-1,0]$ and $f_2 \in C[0,1]$ defined by 
\[ f_1 \coloneqq f_{|[-1,0-]} \mquad { and }  f_2 \coloneqq  f_{|[0+,1]}.\]
This is to say that $C(U)$ is a direct product of its two subspaces which may be identified with $C[-1,0]$ and $C[0,1]$. 

With this convention, given $p,q\in [0,1]$, it makes sense to define
\[ T(t) f = \e^{tG_p^I} f_1 + \e^{tG_q} f_2 \]
where $G_p^I$ and $G_q$ are defined in Section \ref{abb}. It is clear that $\left ( T(t) \right )_{t\ge 0}$ is a conservative Feller semigroup in $C(U)$ and that its generator, say $A_0$, is defined on the domain composed of $f$ such that $f_1 \in \dom{G_p^I}$ and $f_2 \in \dom{G_q}$ by the formula 
\[ A_0 f = G_p^I f_1 + G_q f_2 = f_1'' +f_2''.\]
In other words, an $f$ belongs to $\dom{A_0}$ iff when restricted to either of the two subintervals forming $U$ it is twice continuously differentiable and the following boundary conditions are satisfied:
\begin{align*} f'(-1)=f'(1)&=0,  \\ p f''(0-) + (1-p) f'(0-) &= 0, \\   q f''(0+) - (1-q) f'(0+) &= 0.\end{align*}
Moreover, $A_0f = f''.$ This operator describes two independent Brownian motions in two non-communicating intervals. In the right interval the related process is a Brownian motion with reflecting barrier at $x=1$ and a sticky barrier at $0+$ with stickiness coefficient $q$. In $[-1,0-]$ the process is a mirror image of an analogous Brownian motion with stickiness coefficient $p$.

As a corollary to Theorems \ref{ass} and \ref{assprim} we also have the following information on the asymptotic behavior of the semigroup/process under consideration.

\begin{thm}\label{assbis} There are positive constants $K$ and $\omega$ (depending perhaps on $p$ and $q$) such that 
\begin{equation}\label{assprim:2} 
 \|\e^{tA_0} - P_{p,q}\| \le K\e^{-\omega t}, \qquad t \ge  0, \end{equation}
where $\e^{tA_0}=T(t)$ and $ P_{p,q}$ is given by $ P_{p,q} f= (P_p^I f_1,P_q f_2).$ \end{thm}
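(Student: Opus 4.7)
The plan is to exploit the product structure of the semigroup. Since $U = [-1,0-] \cup [0+,1]$ is a disjoint union of two closed intervals, the space $C(U)$ is isometrically isomorphic to the direct sum $C[-1,0] \oplus_{\infty} C[0,1]$, equipped with the norm $\|f\| = \max\{\|f_1\|_{C[-1,0]}, \|f_2\|_{C[0,1]}\}$. Under this identification, the semigroup $T(t)$ splits as $T(t)f = (\e^{tG_p^I} f_1,\, \e^{tG_q} f_2)$, and likewise $P_{p,q} f = (P_p^I f_1, P_q f_2)$. Thus estimating $\|T(t) - P_{p,q}\|$ reduces to estimating the norms of the two component differences separately and taking the maximum.

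Concretely, I would first invoke Theorem \ref{assprim} to obtain constants $K_1, \omega_1 > 0$ with
\[ \|\e^{tG_p^I} - P_p^I\|_{\mc L(C[-1,0])} \le K_1 \e^{-\omega_1 t}, \qquad t \ge 0, \]
and then apply Theorem \ref{ass} to get constants $K_2, \omega_2 > 0$ with
\[ \|\e^{tG_q} - P_q\|_{\mc L(C[0,1])} \le K_2 \e^{-\omega_2 t}, \qquad t \ge 0. \]
Both constants may depend on $p$ and $q$ respectively, as allowed.

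Given $f \in C(U)$ with $\|f\| \le 1$, we have $\|f_1\|_{C[-1,0]} \le 1$ and $\|f_2\|_{C[0,1]} \le 1$, so
\[ \|(T(t) - P_{p,q}) f\| = \max\bigl\{ \|(\e^{tG_p^I} - P_p^I) f_1\|,\, \|(\e^{tG_q} - P_q) f_2\| \bigr\} \le \max\{K_1, K_2\}\, \e^{-\min\{\omega_1,\omega_2\} t}. \]
Setting $K \coloneqq \max\{K_1, K_2\}$ and $\omega \coloneqq \min\{\omega_1, \omega_2\} > 0$ yields \eqref{assprim:2}. There is no real obstacle here; the only thing to be careful about is the identification of $C(U)$ with the $\ell^\infty$-direct sum so that operator norms on the sum are exactly the maximum of the component operator norms, which is immediate from the definition of $C(U)$ as continuous functions on the disjoint union.
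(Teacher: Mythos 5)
Your argument is correct and is exactly how the paper obtains this result: the theorem is stated there as an immediate corollary of Theorems \ref{ass} and \ref{assprim}, using the identification of $C(U)$ with the $\ell^\infty$-direct sum of $C[-1,0]$ and $C[0,1]$ and taking the maximum of the constants $K$ and the minimum of the rates $\omega$. Nothing is missing.
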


\subsubsection{A Greiner-like perturbation of the generator leading to communication between the intervals}  
Let $A$ in $C(U)$ be defined as follows. Its domain is composed of $f\in C(U)$ such that $f_1\in C^2[-1,0]$, $f_2 \in C^2[0,1]$ and $f'(1) = f'(-1)=0$. Also,  
\[ Af = f_1'' +f_2''= f''.\]
Next, let $L:\dom{A}\to \R^2$ be defined by 
\[ Lf = (p f''(0-) + (1-p) f'(0-), \\   q f''(0+) - (1-q) f'(0+) ),\]
so that, in particular, we see that $A_0$ of the previous subsection is $A$ restricted to $\ker L$.

Let $\alpha$ and $\beta$ be non-negative  numbers, let $\mu$ be a Borel probability measure on $[-1,0]$ and let $\nu$ be a Borel probability measure on $[0,1]$.  Given such data, we define $\Phi: C(U)\to \R^2$ by 
\begin{equation}\label{maingen:0} \Phi f = \left ( \alpha [\nu (f) - f(0-)], \beta [ \mu (f)-f(0+)] \right ), \qquad f \in C(U).\end{equation}Here and in what follows, for $f\in C(U)$, we write $\mu (f)$ and $\nu (f)$ to denote $\int_{-1}^0 f_1 \ud \mu$ and $\int_0^1 f_2\ud \nu$, respectively. Our goal in this subsection is to show that the operator \[ A_\Phi\coloneqq A_{|\dom{A_\Phi}},\] \ie the operator $A$ restricted to 
\[ \dom{A_\Phi}\coloneqq \{f \in \dom{A}; Lf = \Phi f\}\]
is a conservative Feller generator. 

In the related process, communication between the intervals $[-1,0-]$ and $[0+,1]$ is possible through a semi-permeable membrane at $x=0$. A~particle starting in $[0+,1]$ performs a sticky Brownian motion in this interval (with reflecting barrier at $x=1$), but the time it spends at the sticky boundary $x=0^+$ is measured (for $r=0$ this measuring is done by the L\'evy local time for Brownian motion, \cites{ito,liggett}). After a sufficiently long random time is spent at the boundary, the particle filters through the membrane to its other side. The larger is $\beta$ the shorter is the time needed to filter through the membrane, and thus it is appropriate to refer to $\beta$ as the permeability coefficient. In particular, for $r=1$, the time spent at the boundary is exponential with parameter $\beta$; in agreement with \cite{fellera4} p. 3, this process will be called an elementary jump process. For $r=0$ the time spent at the boundary is exponential (with the same parameter) with respect to the L\'evy local time; such processes are termed snapping out Brownian motions  by Lejay \cite{lejayn}.  
The measure $\mu$ describes the particle's position after it filters through the membrane. Intuitively, the most natural choice seems to be $\mu (f) = f(0-)$ (the Dirac measure concentrated at $x=0-$), describing the situation where the particle after filtering through the membrane starts its motion from the closest vicinity of $x=0$.   However, for $p=q=1$ (\ie in the case of elementary jump through the membrane) this leads to a rather uninteresting dynamics and thus we decided to work with a more general $\mu$. Needless to say, in $[-1,0-]$ the process is a mirror reflection of a similar Brownian motion with stickiness coefficient $p$ and permeability coefficient $\alpha$. 

We note that $A_0$ and $A_\Phi$ are restrictions of the same operator to different domains. Hence, in proving the following generation theorem we use the seminal ideas of Greiner \cite{greiner}, who pioneered the research on domain changing perturbations of semigroups' generators. (See also \cite{nickelnewlook} for an interesting perspective on Greiner's result.)


\begin{thm} \label{maingen} The operator $A_\Phi$ is a conservative Feller generator. 
\end{thm}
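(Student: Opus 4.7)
My plan is to invoke the Hille--Yosida theorem for conservative Feller generators, exactly as in Proposition~\ref{toiac}, which reduces the task to four ingredients: density of $\dom{A_\Phi}$, conservativeness ($1_U \in \dom{A_\Phi}$ with $A_\Phi 1_U = 0$), the positive maximum principle, and the range condition $\mathrm{Range}(\lambda - A_\Phi) = C(U)$ for some $\lambda > 0$. Density is routine (a bump-function argument together with a two-parameter correction that adjusts the boundary data at $0\pm$), and since $\mu$ and $\nu$ are probability measures, $\Phi 1_U = 0$ while $L 1_U = 0$ is trivial, so $1_U\in\dom{A_\Phi}$ and $A_\Phi 1_U = 0$.

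For the positive maximum principle, I would take $f\in\dom{A_\Phi}$ with a positive maximum at $x_0$ and verify that $f''(x_0)\le 0$. Interior points and the Neumann endpoints $x_0=\pm 1$ are standard. The interesting cases are $x_0=0\pm$: assuming $x_0=0-$, one has $f'(0-)\ge 0$ and $\nu(f)-f(0-)\le 0$, so the transmission condition $pf''(0-)+(1-p)f'(0-)=\alpha[\nu(f)-f(0-)]$ forces $pf''(0-)\le 0$. When $p>0$ this finishes the case; when $p=0$ it forces $f'(0-)=0$, after which a one-sided Taylor expansion at a right-endpoint maximum yields $f''(0-)\le 0$. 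The case $x_0=0+$ is symmetric, using the second transmission condition and the sign of $\mu(f)-f(0+)$.

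The crux is the range condition, which I would establish through a Greiner-type perturbation scheme \cite{greiner} tailored to the fact that $A_\Phi$ and $A_0$ are restrictions of the same maximal operator $A$ by different boundary functionals. For $g\in C(U)$ and $\lambda>0$, I would seek the solution in the form $f=f_0+D_\lambda y$, where $f_0:=(\lambda - A_0)^{-1}g$ is supplied by Theorem~\ref{assbis} and satisfies $Lf_0=0$, and $D_\lambda:\R^2\to\ker(\lambda-A)$ is the Dirichlet operator sending $y$ to the unique $h\in\dom{A}$ with $\lambda h-h''=0$, $h'(\pm 1)=0$, and $Lh=y$; the explicit formula for $D_\lambda y$ uses the Neumann-adapted basis $\cosh(\sqrt\lambda(x+1))$ on $[-1,0-]$ and $\cosh(\sqrt\lambda(1-x))$ on $[0+,1]$. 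The constraint $Lf=\Phi f$ then collapses to the finite-dimensional linear equation $(I-\Phi D_\lambda)\,y=\Phi f_0$ on $\R^2$. Since $\Phi$ is bounded by $2\max(\alpha,\beta)$, the only remaining estimate is $\|D_\lambda\|_{\R^2\to C(U)}\to 0$ as $\lambda\to\infty$; this follows from the cosh representation because the normalising denominators grow like $\sqrt\lambda\sinh\sqrt\lambda$ or $\lambda\cosh\sqrt\lambda$. For $\lambda$ sufficiently large, $I-\Phi D_\lambda$ is then invertible on $\R^2$, producing the required $f\in\dom{A_\Phi}$ solving $(\lambda-A_\Phi)f=g$. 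This asymptotic control on $D_\lambda$ is the step I expect to be the main technical hurdle; once it is in hand, the Hille--Yosida hypotheses are satisfied, the range condition forces $A_\Phi$ to be closed (as in Proposition~\ref{toiac}), and conservativity of the resulting Feller semigroup follows automatically from $A_\Phi 1_U=0$.
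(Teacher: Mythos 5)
Your proposal is correct and follows essentially the same route as the paper: a Hille--Yosida/positive-maximum-principle reduction, with the range condition handled by Greiner's scheme using the Dirichlet operator $D_\lambda$ (the paper's $L_\lambda$) built from the same $\cosh$-basis of $\ker(\lambda-A)$ and the same estimate $\|D_\lambda\|\to 0$ as $\lambda\to\infty$ coming from the denominators $m_\lambda(r)$. The only cosmetic differences are that you invert $I-\Phi D_\lambda$ on $\R^2$ instead of $I-L_\lambda\Phi$ on $C(U)$ (these are equivalent) and that you spell out the positive maximum principle at $0\pm$, which the paper delegates to a reference.
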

\begin{proof}
As in \cite{knigaz} p. 17 it can be shown that $A_\Phi$ satisfies the positive maximum principle. Moreover, it is clear that $1_U \in \dom{A_\Phi}$ with $A_\Phi 1_U = 0$ (where, of course, $1_U(x) =1 $ for $x \in U$). Since $A_\Phi$ is also densely defined, we will be done once existence of an $f\in \dom{A_\Phi}$ satisfying 
\begin{equation}\label{maingen:1} \lam f - A_\Phi f = g \end{equation}
is established for a fixed $\lam >0 $ and  all $g\in C(U)$.

This is where we follow the approach of Greiner. First we note that the kernel of $\lam - A$ of is spanned by $k_{1,\lam}\in C[-1,0]$ and $k_{2,\lam}\in C[0,1]$ defined by 
\begin{align*}
k_{1,\lam} (x) &= \cosh \sqrt \lam (x+1), \qquad x \in [-1,0],\\
k_{2,\lam} (x) &= \cosh \sqrt \lam (x-1), \qquad x \in [0,1].\end{align*}
For an $f = Ck_{1,\lam} +Dk_{2,\lam} $ in this kernel ($C$ and $D$ are real constants),
\[ Lf = (Cm_\lam (p),Dm_\lam (q))\] 
where 
\[ m_\lam (r) = r \lam \cosh \sqrt \lam + (1-r) \sqrt \lam \sinh \sqrt \lam .\]
Thus, $L$ establishes a one-to-one correspondence between $\ker (\lam - A)$ and $\R^2$ with $L_\lam \coloneqq (L_{\ker (\lam - A)})^{-1}, L_\lam : \R^2\to \ker (\lam - A)$ given by 
\begin{equation}\label{maingen:1-a} L_\lam (x_1,x_2) = \frac {x_1}{m_\lam(p)} k_{1,\lam} +  \frac {x_2}{m_\lam(q)} k_{2,\lam}. \end{equation}

We note that 
 \begin{equation}\label{maingen:1a} \|L_\lam\|\le \max_{r=p,q}  \frac {\cosh \sqrt \lam}{m_\lam (r)}\end{equation} (here, $\R^2$ is equipped with the max norm). Also,
\begin{equation} \label{maingen:1b}\frac {\cosh \sqrt \lam}{m_\lam (r)}\le \begin{cases} \frac 1{r\lam}, & r >0 ,\lam >0,\\ \frac{\cosh \sqrt \lam}{\sqrt \lam  \sinh \sqrt \lam} \le \frac {M_0}{\sqrt \lam}, &  r=0,\lam >1,\end{cases}
\end{equation}
where $M_0 \coloneqq \sup_{x\ge 1} \frac{\cosh x}{\sinh x}$ is finite, since the function involved here is continuous and has finite limits at $x=1$ and $x=\infty$. It follows that for sufficiently large $\lam$ the map $L_\lam \Phi$ has norm smaller than $1$ and thus $I_{C(U)}-L_\lam \Phi$ is invertible. 

Consider such a $\lam $ and a $g\in C(U)$. Let \[ f\coloneqq (I_{C(U)}-L_\lam \Phi)^{-1} \rez{A_0} g\] so that $f = L_\lam \Phi f + \rez{A_0}g $ (comp. \cite{greiner}*{Lemma 1.4}). Since $L_\lam \Phi f\in \ker (\lam - A)\subset \dom{A}$ and $\rez{A_0}g\in \dom{A}$, we see that $f\in \dom{A}$. Then, the calculation $Lf = LL_\lam \Phi f + L\rez{A_0} g = \Phi f + 0 = \Phi f,$ shows that $f $ belongs to $\dom{A_\Phi}$. Moreover, since $A\rez{A_0}g = A_0 \rez{A_0} g = \lam \rez{A_0}g - g$ and $L_\lam \Phi f$ belongs to $\ker (\lam - A)$, 
\begin{align*} A_\Phi f = A f& = A(L_\lam \Phi f + \rez{A_0} g) = \lam L_\lam \Phi f +   \lam \rez{A_0}g - g \\
&= \lam f - g, \end{align*} 
proving that $f$ solves the resolvent equation \eqref{maingen:1}.  \end{proof}

We note that, since operators satisfying the positive maximum principle are dissipative (see e.g. \cite{ethier}*{Lemma 2.1, p. 165}), the solution to the resolvent equation is unique. Hence, 
as a by-product of the proof, we obtain 
\begin{equation} \rez{A_\Phi} =  (I_{C(U)}-L_\lam \Phi)^{-1}  \rez{A_0}, \label{maingen:2}\end{equation}
for all $\lam >0$ such that $I_{C(U)}- L_\lam \Phi $ is invertible (this is in fact repeating Lemma 1.4 in \cite{greiner} in the context of Feller generators).

\subsection{The vertical component: a limit theorem} \label{tvc} Before continuing, we recall that the classical Trotter--Kato Theorem (see e.g. \cites{goldstein,pazy}) says that strongly continuous equibounded semigroups \( \sem{A^\eps}, \eps \in (0,1]\) in a Banach space $E$ converge as $\eps \to 0$ to a strongly continuous semigroup $\sem{B}$, \ie 
\begin{equation}\label{alt:1} \grae \e^{tA^\eps } f= \e^{tB}f, \qquad t \ge 0, f \in E, \end{equation}
iff 
\[ \grae \rez{A^\eps} f = \rez{B} f, \qquad  f \in E, \]
for some/all $\lam >0$; moreover, then the limit \eqref{alt:1} is uniform in $t$ in compact subsets of $[0,\infty)$. 
In other words, such regular convergence of semigroups is completely characterized (see also \cites{abhn,kniga,knigaz,ethier} for the Sova--Kurtz version \cites{kurtz,sova} of this characterization). 

However, in the theory of singular perturbations and in the particular example we are studying here the limit semigroup is strongly continuous only on a subspace of $E$: we are facing a limit theorem of the form 
\begin{equation}\label{alt:2}  \grae \e^{tA^\eps} f = \e^{tB} P f , \qquad t >0, f \in E\end{equation}
where \sem{B} is a strongly continuous semigroup on a subspace $E_0$ of $E$ and $P$ is a projection on $E_0$ (in the sense that $P^2=P$ and $Pf =f , f \in E_0$). Needless to say, in this case the classical theory does not work and, in particular, condition 
\begin{equation}\label{alt:3} \grae  \rez{A^\eps} f = \rez{B} Pf, \qquad  f \in E, \end{equation}
for all (some) $\lam >0$ is necessary but not sufficient for \eqref{alt:2} (see \cite{deg} or \cite{knigaz}).

As we have see in Section \ref{ldwa},  \eqref{alt:3} may imply \eqref{alt:2} provided that the semigroups involved possess additional regularity properties (like, for example, uniform holomorphicity -- see e.g. \cite{knigaz}, Chapters 31 and 41 for details). A different set of conditions guaranteeing that \eqref{alt:3} implies \eqref{alt:2} has been given by T. G. Kurtz \cites{ethier,kurtzper,kurtzapp}. While Kurtz's singular convergence theorem is usually expressed in terms of the so-called extended limit of generators, for our subsequent analysis the following resolvent-version will be more practical. This result may be easily deduced e.g. from combined Lemma 7.1 and Theorem 42.2 in \cite{knigaz}.    

\begin{thm} \label{tkurtza} Suppose $A^\eps, \eps \in (0,1]$ are generators of strongly continuous equibounded semigroups. Suppose also that  for some $\lam >0$
\[ \grae \rez{\eps^2 A^\eps} = \rez{A_0} \]
where $A_0$ is the generator of a strongly continuous semigroup \sem{A_0} such that 
\[ Pf\coloneqq \grat \e^{tA_0} f , \qquad f \in E\]
 exists. Then condition \eqref{alt:3}  (for some $\lam >0$, with the same $P$) implies \eqref{alt:2}, and the limit is uniform in $t$ in compact subsets of $(0,\infty)$; for $f\in E_0$ the limit is uniform in $t$ in compact subsets of $[0,\infty).$ 
\end{thm}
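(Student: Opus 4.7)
My plan is to deduce this statement from Kurtz's singular convergence theorem in the resolvent form given by Lemma 7.1 together with Theorem 42.2 in \cite{knigaz}. Two time scales are involved: the fast scale $t/\eps^2$ governed by the first resolvent hypothesis, and the slow scale $t$ governed by \eqref{alt:3}; the proof combines them.

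The fast-scale side comes first. The first hypothesis together with the classical Trotter--Kato theorem applied to the equibounded semigroups $(\e^{s\eps^2 A^\eps})_{s\ge 0}$ yields $\e^{s\eps^2 A^\eps}f \to \e^{sA_0}f$ as $\eps \to 0$, uniformly on compact subsets of $[0,\infty)$. The assumed existence of $Pf=\grat\e^{sA_0}f$ combined with equiboundedness of $(\e^{sA_0})_{s\ge 0}$ forces $P$ to be bounded; letting $s \to \infty$ in $\e^{sA_0}\e^{tA_0} = \e^{(s+t)A_0}$ gives $\e^{tA_0}P = P\e^{tA_0} = P$, so $P$ is a projection onto a closed $\e^{tA_0}$-invariant subspace $E_0 \coloneqq PE$. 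Next I would extract the limit semigroup from \eqref{alt:3}: the limit $R(\lam)f \coloneqq (\lam - B)^{-1}Pf$ is a pseudoresolvent with range in $E_0$, inheriting the Hille--Yosida bounds $\|R(\lam)^n\| \le M\lam^{-n}$ from those of $(\lam - A^\eps)^{-n}$. Identifying $B$ as the generator of an equibounded $C_0$-semigroup on $E_0$ whose resolvent is $R(\lam)\restriction_{E_0}$ requires density of range, which follows from a careful combination of both hypotheses (this is part of Lemma 7.1 in \cite{knigaz}).

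For the convergence itself, I would use the splitting
\[ \e^{tA^\eps}f - \e^{tB}Pf = \e^{(t-s)A^\eps}\bigl(\e^{sA^\eps}f - Pf\bigr) + \bigl(\e^{(t-s)A^\eps}Pf - \e^{tB}Pf\bigr), \]
valid for any $s\in (0,t)$. The second term is a classical Trotter--Kato convergence restricted to the nondegenerate direction $Pf\in E_0$ and tends to zero as $\eps \to 0$ for each fixed $s$. The first term requires showing $\e^{s_\eps A^\eps}f \to Pf$ along sequences $s_\eps \to 0$ with $s_\eps/\eps^2 \to \infty$, which formally combines the fast-scale convergence with $\grat\e^{sA_0}f = Pf$.

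The main obstacle is this last claim. The fast-scale Trotter--Kato convergence yields uniform convergence of $\e^{s\eps^2 A^\eps}f$ to $\e^{sA_0}f$ only on compact $s$-intervals, whereas writing $\e^{s_\eps A^\eps}f = \e^{(s_\eps/\eps^2)\cdot\eps^2 A^\eps}f$ forces us to evaluate at times $s_\eps/\eps^2 \to \infty$. The standard resolution, carried out in \cite{knigaz}, is to work on the resolvent side: Laplace-transform representations combined with the hypothesized resolvent convergences and the uniform Hille--Yosida bounds control both scales simultaneously. Uniformity of the final limit on compact subsets of $(0,\infty)$ then follows from equicontinuity of the approximating semigroups and continuity of $t \mapsto \e^{tB}Pf$; for $f\in E_0$ the first (boundary-layer) term in the splitting vanishes identically at $s=0$ (since $Pf = f$), so the uniformity extends all the way to compact subsets of $[0,\infty)$.
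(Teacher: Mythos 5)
Your proposal is correct and follows essentially the same route as the paper, which likewise disposes of this statement by reduction to Kurtz's singular convergence theorem in the resolvent form of Lemma 7.1 combined with Theorem 42.2 of \cite{knigaz}; your additional sketch of the two-time-scale mechanism (fast-scale Trotter--Kato convergence to $\sem{A_0}$, identification of $P$ as the ergodic projection, and the boundary-layer splitting) accurately describes what those cited results encapsulate, and you correctly flag that the only delicate point -- controlling $\e^{s_\eps A^\eps}f$ at intermediate times $s_\eps\to 0$ with $s_\eps/\eps^2\to\infty$ -- is exactly what the cited resolvent-side argument handles.
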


We will apply this theorem to the Feller generators \begin{equation}\label{juzsamniewiem} A^\eps \coloneqq \eps^{-2} A_{\eps^2 \Phi}. \end{equation}
In other words, we will study the situation in which diffusion is very fast while permeability coefficients of the membrane are low.

\newcommand{\efel}{f^-}
\newcommand{\efer}{f^+}

Let $E_0\subset C(U)$ be the subspace composed of functions which are constant in each of the subintervals forming $U$ (\ie for $f\in E_0$ both $f_1$ and $f_2$ are constant functions). Each member of $E_0$ may be naturally identified with two real numbers, say $\efel$ and $\efer$, and $E_0$ may be naturally identified with $\R^2$ with maximum norm.

\begin{thm} \label{limit} Let $B$ be the operator in $E_0$ which may be identified with the matrix of \eqref{macierz}.
In other words, $B(\efel,\efer) = (\alpha (\efer-\efel), \beta(\efel-\efer)).$ Then 
\[ \grae \e^{tA^\eps} f = \e^{tB} P_{p,q} f , \qquad t >0, f \in C(U),\]
where $P_{p,q}$ is defined in Theorem \ref{assbis} and the limit is uniform in $t$ in compact subsets of $(0,\infty)$; for $f\in E_0$ the limit is uniform in compact subsets of $[0,\infty)$. 
\end{thm}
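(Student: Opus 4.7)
The plan is to apply the resolvent version of Kurtz's singular convergence theorem, namely Theorem \ref{tkurtza}, with $A^\eps = \eps^{-2} A_{\eps^2 \Phi}$. Three hypotheses must be checked: equiboundedness of \sem{A^\eps}, strong convergence of $R(\lambda, \eps^2 A^\eps)$ to $R(\lambda, A_0)$, and the main resolvent condition $\lim_{\eps \to 0} R(\lambda, A^\eps) f = R(\lambda, B) P_{p,q} f$. Equiboundedness is immediate: by Theorem \ref{maingen}, $A_{\eps^2 \Phi}$ generates a Feller contraction semigroup, hence so does $A^\eps$, which is merely a time-rescaling of it by $\eps^{-2}$.

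For the second hypothesis, note that $\eps^2 A^\eps = A_{\eps^2 \Phi}$, and formula \eqref{maingen:2} gives $R(\lambda, A_{\eps^2 \Phi}) = (I_{C(U)} - L_\lambda \eps^2 \Phi)^{-1} R(\lambda, A_0)$. For $\lambda$ fixed, $L_\lambda$ is bounded and $\Phi$ is bounded, so $\eps^2 L_\lambda \Phi$ tends to $0$ in operator norm, and consequently $(I_{C(U)} - L_\lambda \eps^2 \Phi)^{-1} \to I_{C(U)}$ in operator norm. This yields norm-convergence of the resolvents to $R(\lambda, A_0)$. The auxiliary stabilization required by Theorem \ref{tkurtza}, namely $\lim_{t \to \infty} \e^{t A_0} f = P_{p,q} f$, is furnished (indeed in the strong sense with exponential rate) by Theorem \ref{assbis}.

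The crux is the third condition. Setting $u_\eps := R(\lambda, A^\eps) f$ and writing the resolvent equation $\lambda u_\eps - \eps^{-2} u_\eps'' = f$ on each of the two subintervals, one obtains $u_\eps'' = \eps^2 (\lambda u_\eps - f)$. The contraction bound $\|u_\eps\| \le \lambda^{-1} \|f\|$ shows that $u_\eps''$ is of order $\eps^2$ uniformly, and integrating from $x$ to $1$ (using $u_\eps'(1) = 0$) gives $u_\eps'(x) = \eps^2 \int_x^1 (f - \lambda u_\eps) \, \ud y$ on $[0+,1]$, which is also $O(\eps^2)$. Consequently $u_\eps$ becomes uniformly flat on each subinterval as $\eps \to 0$, converging uniformly to constants $u^-$ on $[-1, 0-]$ and $u^+$ on $[0+, 1]$. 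Substituting these expressions into the transmission condition $q u_\eps''(0+) - (1-q) u_\eps'(0+) = \eps^2 \beta [\mu(u_\eps) - u_\eps(0+)]$, dividing by $\eps^2$, and passing to the limit gives $q \lambda u^+ - q f(0+) + (1-q)\bigl(\lambda u^+ - \int_0^1 f \, \ud y\bigr) = \beta (u^- - u^+)$, which simplifies to $\lambda u^+ - (P_{p,q} f)_2 = \beta (u^- - u^+)$. The analogous calculation on the left gives $\lambda u^- - (P_{p,q} f)_1 = \alpha (u^+ - u^-)$. Together, these are precisely the components of $(\lambda - B)(u^-, u^+) = P_{p,q} f$, proving that the limit equals $R(\lambda, B) P_{p,q} f$.

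The main technical hurdle is to justify rigorously the passages to the limit in the transmission condition, in particular to control the boundary trace $u_\eps(0+)$ and the integrals $\mu(u_\eps), \nu(u_\eps)$; this is where uniform (rather than merely $L^2$) convergence of $u_\eps$ to its constant limits is essential, and is the payoff for working in the space of continuous functions and for establishing the stabilization in Theorem \ref{assbis} in norm. As an alternative route that bypasses some of the boundary-trace bookkeeping, one can work directly with the identity $R(\lambda, A^\eps) = \eps^2 (I_{C(U)} - L_{\eps^2 \lambda} \eps^2 \Phi)^{-1} R(\eps^2 \lambda, A_0)$, use the substitution $s = \eps^2 t$ to obtain $\eps^2 R(\eps^2 \lambda, A_0) f = \int_0^\infty \e^{-\lambda s} \e^{s A_0 / \eps^2} f \, \ud s \to \lambda^{-1} P_{p,q} f$ by dominated convergence and Theorem \ref{assbis}, and then carry out the asymptotic analysis of $\eps^2 L_{\eps^2 \lambda}$ using the Taylor expansion $m_{\eps^2 \lambda}(r) = \eps^2 \lambda (1 + O(\eps^2))$ noted implicitly in \eqref{maingen:1b}.
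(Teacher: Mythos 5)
Your overall architecture coincides with the paper's: both proofs run through Theorem \ref{tkurtza} applied to $A^\eps=\eps^{-2}A_{\eps^2\Phi}$, with equiboundedness coming from the Feller/contraction property, the stabilization $\grat \e^{tA_0}f=P_{p,q}f$ supplied by Theorem \ref{assbis}, and the convergence $\rez{\eps^2 A^\eps}\to\rez{A_0}$ handled essentially as you describe (the paper dismisses it as ``a similar but simpler analysis''). Where you genuinely diverge is in the proof of the key resolvent identity $\grae\rez{A^\eps}f=\rez{B}P_{p,q}f$, which the paper isolates as Lemma \ref{lem:limit}. The paper stays entirely inside the Greiner framework: it writes $\rez{A^\eps}=\eps^2(I_{C(U)}-L_{\eps^2\lam}\eps^2\Phi)^{-1}(\eps^2\lam-A_0)^{-1}$, sends $\eps^2(\eps^2\lam-A_0)^{-1}\to\lam^{-1}P_{p,q}$ via Theorem \ref{assbis}, and computes the limit of $L_{\eps^2\lam}\eps^2\Phi$ explicitly from the kernel functions $k_{1,\lam},k_{2,\lam}$ and the asymptotics $\eps^2/m_{\eps^2\lam}(r)\to\lam^{-1}$ --- this is exactly the ``alternative route'' you sketch at the end. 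Your primary route instead analyzes the resolvent equation directly: $u_\eps''=\eps^2(\lam u_\eps-f)$ forces uniform flatness, and dividing the transmission conditions by $\eps^2$ identifies the limiting pair as the solution of $(\lam-B)(u^-,u^+)=P_{p,q}f$; your computation of both transmission conditions is correct. This is more elementary (no explicit solution formulas are needed) and it yields the resolvent condition for every $\lam>0$ rather than only for large $\lam$. Its price is one step you assert without justification: uniform flatness together with the bound $\norm{u_\eps}\le\lam^{-1}\norm{f}$ gives precompactness of the family $(u_\eps)$, not convergence. To close this, extract a subsequential limit, note that every such limit is a pair of constants solving the same linear system, and invoke invertibility of $\lam-B$ on $E_0$ (valid for all $\lam>0$ since $B$ generates a contraction semigroup there) to conclude that the whole family converges. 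With that sentence added, your argument is complete and correct.
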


For the proof of this result we need the following lemma. 

\begin{lem}\label{lem:limit} For sufficiently large $\lam$, 
\[ \grae \rez{A_\eps}f = \rez{B} P_{p,q}f, \qquad f \in C(U).\]
\end{lem}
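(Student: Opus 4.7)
The plan is to exploit the explicit resolvent formula \eqref{maingen:2}: replacing $\Phi$ by $\eps^2\Phi$ and $\lambda$ by $\eps^2\lambda$, and using $\lambda-A^\eps=\eps^{-2}(\eps^2\lambda-A_{\eps^2\Phi})$ from \eqref{juzsamniewiem}, one gets
\[
(\lambda-A^\eps)^{-1}f\;=\;\bigl(I-\eps^2 L_{\eps^2\lambda}\Phi\bigr)^{-1}\,h_\eps,\qquad h_\eps\coloneqq\eps^2(\eps^2\lambda-A_0)^{-1}f.
\]
So the task splits into (i) identifying the limit of $h_\eps$, (ii) identifying the limit of $\eps^2 L_{\eps^2\lambda}\Phi$ in operator norm, and (iii) showing that for large $\lambda$ the inverses converge.

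For (i), write $\mu=\eps^2\lambda$, so $h_\eps=\lambda^{-1}\mu(\mu-A_0)^{-1}f=\lambda^{-1}\mu\int_0^\infty e^{-\mu t}e^{tA_0}f\,\mathrm dt$. Substituting $s=\mu t$ gives $h_\eps=\lambda^{-1}\int_0^\infty e^{-s}\,e^{(s/\mu)A_0}f\,\mathrm ds$, and the exponential-in-$t$ convergence $e^{tA_0}\to P_{p,q}$ from Theorem~\ref{assbis} together with dominated convergence yields
\[
\lim_{\eps\to 0}h_\eps=\lambda^{-1}P_{p,q}f.
\]

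For (ii), I use the explicit formula \eqref{maingen:1-a}. A Taylor expansion of $m_{\eps^2\lambda}(r)=r\eps^2\lambda\cosh(\eps\sqrt\lambda)+(1-r)\eps\sqrt\lambda\sinh(\eps\sqrt\lambda)$ gives $m_{\eps^2\lambda}(r)=\eps^2\lambda+O(\eps^4)$ uniformly in $r\in\{p,q\}$, so $\eps^2/m_{\eps^2\lambda}(r)\to\lambda^{-1}$; meanwhile $k_{i,\eps^2\lambda}\to 1$ uniformly on the relevant interval. Consequently, as operators from $\mathbb{R}^2$ into $C(U)$,
\[
\eps^2 L_{\eps^2\lambda}(x_1,x_2)\;\xrightarrow[\eps\to 0]{}\;\lambda^{-1}\iota(x_1,x_2),
\]
where $\iota\colon\mathbb{R}^2\hookrightarrow\notzee\subset C(U)$ is the natural embedding; the convergence is in norm and is uniform on bounded sets. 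Composing with the bounded $\Phi$ gives $\eps^2 L_{\eps^2\lambda}\Phi\to\lambda^{-1}\iota\Phi$ in the operator norm of $C(U)$.

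For (iii), by the estimates \eqref{maingen:1a}--\eqref{maingen:1b} the norms $\|\eps^2 L_{\eps^2\lambda}\|$ are bounded by a constant times $\lambda^{-1}$ uniformly in small $\eps$ (separately handling $r>0$ and $r=0$). Hence, for $\lambda$ large enough, $\|\eps^2 L_{\eps^2\lambda}\Phi\|\le \tfrac12$ for all small $\eps$, so $I-\eps^2 L_{\eps^2\lambda}\Phi$ is uniformly invertible and, combined with (ii),
\[
\bigl(I-\eps^2 L_{\eps^2\lambda}\Phi\bigr)^{-1}\;\xrightarrow[\eps\to 0]{}\;\bigl(I-\lambda^{-1}\iota\Phi\bigr)^{-1}
\]
in operator norm. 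Putting (i)--(iii) together and using that $\iota\Phi$ leaves $\notzee\cong\mathbb R^2$ invariant where it acts as the matrix in \eqref{macierz}, i.e.\ as $B$, we obtain
\[
\lim_{\eps\to 0}(\lambda-A^\eps)^{-1}f=\bigl(I-\lambda^{-1}\iota\Phi\bigr)^{-1}\lambda^{-1}P_{p,q}f=\lambda(\lambda-B)^{-1}\cdot\lambda^{-1}P_{p,q}f=(\lambda-B)^{-1}P_{p,q}f,
\]
which is the desired identity. The main obstacle is the bookkeeping in (ii)--(iii): one has to check that the factor $\eps^2$ precisely compensates the blow-up of $L_{\eps^2\lambda}$ and that the estimates \eqref{maingen:1b} give the needed uniformity in the stickiness parameters $p,q$, including the case $p=0$ or $q=0$.
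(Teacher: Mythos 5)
Your proposal is correct and follows essentially the same route as the paper: factor $\rez{A^\eps}$ via the Greiner resolvent formula \eqref{maingen:2} after rescaling $\lam\mapsto\eps^2\lam$, show $\eps^2(\eps^2\lam-A_0)^{-1}\to\lam^{-1}P_{p,q}$ using Theorem \ref{assbis}, show $\eps^2 L_{\eps^2\lam}\Phi$ converges (with the uniform smallness for large $\lam$ coming from \eqref{maingen:1a}--\eqref{maingen:1b}), and assemble. The only differences are that you spell out the Laplace-transform/dominated-convergence argument behind the resolvent limit of $A_0$, which the paper leaves implicit, and you claim operator-norm rather than strong convergence of the perturbation factor (harmless, since $\Phi$ has finite rank).
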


\begin{proof} Solving the resolvent equation for $A^\eps $: $\lam f - A^\eps f = g$ is equivalent to solving the resolvent equation for $A_{\eps^2 \Phi} $ with $\lam $ replaced by $\eps^2 \lam$ and $g$ replaced by $\eps^2 g$. On the other hand, by \eqref{maingen:1b}, \[ \frac {\eps^2 \cosh \eps \sqrt \lam}{m_{\eps^2 \lam}  (r)}\le \frac 1{r\lam}, \qquad r \in (0,1], \lam >0, \eps \in (0,1].\] Moreover, 
\begin{align*} \frac {\eps^2 \cosh \eps \sqrt \lam}{m_{\eps^2 \lam}  (0)}&= \frac {\eps \cosh \eps \sqrt \lam }{\sqrt \lam \sinh \eps \sqrt \lam } \le \begin{cases} \frac {\eps M_2}{\sqrt \lam }\le \frac {M_2}{\sqrt \lam}, & \eps \sqrt \lam \ge 1, \\  \frac {M_3} \lam , & \eps \sqrt \lam\in (0,1], \end{cases} \quad \eps \in (0,1],\end{align*}
where $M_2\coloneqq  \sup_{x>0 }\frac {\cosh x}{\sinh x} $ and $M_3 \coloneqq \sup_{x\in (0,1]} \frac {x \cosh x}{\sinh x}$ are finite because the functions $x\mapsto  \frac {\cosh x} {\sinh x}$ 
and $x\mapsto \frac {x \cosh x}{\sinh x}$
are continuous and have finite limits at appropriate intervals' ends.

It follows, by \eqref{maingen:1a}, that for sufficiently large $\lam $ the norm of $\eps^2 L_{\eps^2 \lam}  \Phi$ is smaller than $1$, regardless of the choice of $\eps $, and so $I -  \eps^2  L_{\eps^2 \lam}\Phi$ is invertible for all $\eps \in (0,1)$. Therefore, for such $\lam$, by \eqref{maingen:2}, 
\[ \rez{A^\eps } = \eps^2 (\eps^2 \lam - A_{\eps^2 \Phi})^{-1} = \eps^2 (I_{C(U)} - L_{\eps^2 \lam}\eps^2 \Phi)^{-1} \left ( \eps^2 \lam - A_0 \right )^{-1}.\]

Next, by Theorem \ref{assbis}, $\grae \eps^2 \left ( \eps^2 \lam - A_0 \right )^{-1}= \lam^{-1}  P_{p,q}$, and we are left with analyzing the factor $(I_{C(U)} - L_{\eps^2 \lam}\eps^2 \Phi)^{-1}$.
To this end we observe that for $f \in C(U)$ (see  \eqref{maingen:1-a} and the definition of $\Phi$) 
\[ L_{\eps^2 \lam } \eps^2 \Phi f = \frac {\eps^2 \alpha [\nu (f) - f(0-)]}{m_{\eps^2 \lam} (p)} k_{1,\eps^2 \lam} + \frac {\eps^2 \beta [f(0+)-\mu (f)]}{m_{\eps^2 \lam} (q)} k_{1,\eps^2 \lam} \]
converges, as $\eps \to 0$, to
\begin{align*} \frac {\alpha [\nu (f) - f(0-)]}\lambda 1_{[-1,0]} &+\frac {\beta [\mu (f)-f(0+)]}\lambda 1_{[0,1]} \\ &= \lam^{-1} \left (\alpha [\nu (f) - f(0-)], \beta [ \mu (f)-f(0+)]\right ),\end{align*}
because, as it is easy to check, $\grae \frac{\eps^2}{m_{\eps^2 \lam} (r)} = \lam^{-1}, r \in [0,1].$ Since $\mu$ and $\nu$ are probability measures, for $f=(\efel, \efer)$ in $E_0$, the latter vector is \[\lam^{-1} \left (\alpha (\efer - \efel ),\beta(\efel - \efer) \right )= \lam^{-1} B(\efel, \efer).\]
This shows that \[\grae \rez{A^\eps} f = \left (I_{C(U)} - \lam^{-1} B\right )^{-1} \lam^{-1} P_{p,q}f = \rez{B}P_{p,q}f,\] as claimed.  
\end{proof} 

\begin{proof}[Proof of Theorem \ref{limit}]  A similar (but simpler)  analysis to that presented in Lemma \ref{lem:limit} shows that $\grae \rez{\eps^2 A^\eps} = \grae \rez{A_{\eps^2\Phi}}= \rez{A_0}$ for all $\lam >0$. Since, by Theorem \ref{assbis}, $\grat \e^{tA_0} f =  P_{p,q} f$, Theorem \ref{limit} is a direct consequence of Theorem \ref{tkurtza} and Lemma \ref{lem:limit}.
\end{proof}

\subsection{The semigroups generated by $\onaeps$}\label{dowod}

The space $C(\Omega)$ may be seen as the injective tensor product of the spaces $C(\be)$ and $C(U)$: 
\[ C(\Omega) = C(\be)\tilde \otimes_\epsilon  C(U),\]
see e.g. \cite{ryan}*{pp. 45-50}.
 This means that the supremum norm in $C(U)$ coincides with the injective norm inherited from $C(\be)$ and $C(U)$, and the set of simple tensors, \ie of functions of the form 
\( f\otimes g , f \in C(\be)\times C(U)\) given by $(f\otimes g)(x,y,z)= f(x,y)g(z), (x,y,z)\in \Omega$, is 
linearly dense in $C(\Omega)$. This allows constructing semigroups of operators in $C(\Omega) $ from building blocks available in $C(\be)$ and $C(U)$ (see \cite{nagel}*{pp. 21-24}), as follows. 

Since $\partial \be$ is assumed to be of class $C^{2,\kappa}, \kappa \in (0,1]$, the $2D$ Laplace operator $\ddwad$ with  domain composed of $C^{2,\kappa} (\Omega)$ functions with normal derivatives vanishing on the boundary is closable, and its closure generates a conservative Feller semigroup in $C(\Omega)$ (see \cite{ethier}*{p. 369}). Let $\sem{\ddwadd}$ be this semigroup, and let $\sem{A^\eps}, \eps \in (0,1]$ be the semigroups generated by $A^\eps$ of \eqref{juzsamniewiem}. 

For any $\eps \in (0,1]$ and $t\ge 0$, one may think of the following map defined on the set of simple tensors 
\[ f\otimes g \mapsto (\e^{t \ddwadd} f )\otimes (\e^{tA^\eps} g). \]
Since such tensors form a linearly dense set in $C(\Omega)$, and since the supremum norm in $C(\Omega)$ coincides with the injective tensor norm inherited from $C(U)$ and $C(\be)$ (\cite{ryan}*{pp. 49-50}), this map may be extended to a bounded linear operator, say $\mc T_\eps (t)$, in $C(\Omega)$ with norm $1$. This operator is positive and $\mc T_\eps (t)1_{\Omega} = 1_{\Omega}$. 

In \cite{nagel}*{pp. 21-24} it is shown that so-constructed $\left (\mc T_\eps (t)\right )_{t\ge 0}$ is a strongly continuous semigroup; this semigroup is termed the injective tensor product of semigroups $\sem{\ddwadd}$ and $\sem{A^\eps}$, and denoted 
\[ \mc T_\eps (t) = \e^{t\ddwadd}\tilde \otimes_\epsilon \e^{tA^\eps}.\]
Moreover, the set of linear combinations of simple tensors of the form $f\otimes g, f \in \dom{\ddwadd}, g \in \dom{A^\eps}$ is a core for the generator of this semigroup. 
It is clear that the last statement is also true if instead of $f \in \dom{\ddwadd}$ one considers $f\in \dom{\ddwad}$, and that $\left (\mc T_\eps (t)\right )_{t\ge 0}$ is a conservative Feller semigroup.

\begin{prop}\label{tsgb:prop:1} For any $\eps \in (0,1]$, the operator $\naeps$ of \eqref{cauchy} is closable and its closure generates the semigroup $\left (\mc T_\eps (t)\right )_{t\ge 0}$.   \end{prop}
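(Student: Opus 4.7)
The plan is to identify the generator $\mc A_\eps$ of the injective tensor product semigroup $\mc T_\eps(t)=\e^{t\ddwadd}\tilde\otimes_\epsilon \e^{tA^\eps}$ as the closure of $\naeps$. The key asset is the core property recalled just before the proposition: the linear span $\mathcal{C}$ of simple tensors $f\otimes g$ with $f \in \dom{\ddwad}$ and $g \in \dom{A^\eps} = \dom{A_{\eps^2 \Phi}}$ is a core for $\mc A_\eps$, on which $\mc A_\eps(f\otimes g) = (\ddwad f)\otimes g + f \otimes (A^\eps g)$.

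First I would check that $\mathcal{C} \subseteq \dom{\naeps}$ and that $\naeps|_\mathcal{C} = \mc A_\eps|_\mathcal{C}$. The required regularity of $u = f \otimes g$ is immediate from that of $f$ and $g$; the Neumann conditions on the lateral boundary of $\Omega$ and at $z = \pm 1$ come from those built into $\dom{\ddwad}$ and $\dom{A^\eps}$, respectively. The transmission conditions at $z=0\pm$ reduce, after using $Lg = \eps^2 \Phi g$ from the definition of $\dom{A_{\eps^2\Phi}}$ and the tensor identity $\nu_{x,y}(f\otimes g) = f(x,y)\nu(g)$ (analogously for $\mu_{x,y}$), to the algebraic identity
\begin{align*}
(p\partial_z^2 + (1-p)\partial_z)u(x,y,0-) &= f(x,y)\bigl[pg''(0-)+(1-p)g'(0-)\bigr]\\
&= \eps^2 \alpha f(x,y)[\nu(g)-g(0-)] \\
&= \eps^2 \alpha[\nu_{x,y}(u)-u(x,y,0-)],
\end{align*}
and analogously at $z=0+$ with $\mu$ in place of $\nu$. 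A direct computation then yields $\naeps u = (\ddwad f)\otimes g + \eps^{-2} f \otimes g'' = \mc A_\eps u$, as required.

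Next I would verify that $\naeps$ itself satisfies the positive maximum principle; this follows the pattern of \cite{knigaz}*{p.\ 17}, the only new ingredient being that at a positive maximum lying on the membrane, say at $(x_0,y_0,0-)$, the quantity $\nu_{x_0,y_0}(u) - u(x_0,y_0,0-)$ is nonpositive, which together with the transmission condition and a one-sided Taylor argument controls the sign of $\partial_z^2 u$ there. Hence $\naeps$ is dissipative and in particular closable. Combining the two ingredients: since $\mathcal{C}$ is a core for the closed operator $\mc A_\eps$ and $\mc A_\eps|_\mathcal{C} = \naeps|_\mathcal{C}$, we obtain $\mc A_\eps = \overline{\mc A_\eps|_\mathcal{C}} = \overline{\naeps|_\mathcal{C}} \subseteq \overline{\naeps}$. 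Conversely, $\lambda - \mc A_\eps$ is surjective onto $C(\Omega)$ for every $\lambda > 0$ (since $\mc A_\eps$ is a Feller generator), and this together with injectivity of $\lambda - \overline{\naeps}$ from dissipativity rules out any proper extension, forcing $\overline{\naeps} = \mc A_\eps$. The main delicate step is the transmission-condition bookkeeping in the first part, where one must ensure that the tensor-product-compatible functionals $\nu_{x,y}$ and $\mu_{x,y}$ correctly distribute the $f(x,y)$ prefactor across both sides of the boundary identity; the rest is routine.
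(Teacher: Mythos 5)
Your argument is correct and follows essentially the same route as the paper: identify $\naeps$ with the tensor-product generator $\mc A_\eps$ on the core of simple tensors $f\otimes g$, $f\in \dom{\ddwad}$, $g\in\dom{A^\eps}$, and use the positive maximum principle to control extensions. The paper packages the last step slightly differently (density of the range of $\lam-\naeps$ plus the Hille--Yosida theorem for Feller semigroups, then a common-core argument), but this is the same mechanism as your dissipativity-plus-surjectivity exclusion of proper extensions.
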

\begin{proof}Arguing as in \cite{knigaz}*{p. 17} we conclude that at $z=0+$ and $z=0-$, $\partial_z u$ vanishes for $u\in \dom{\naeps}$, and this in turn implies that $\naeps$ satisfies the maximum principle. 

For the sake of this proof, let $\mc D$ be the set of linear combinations of simple tensors of the form $f\otimes g, f \in \dom{\ddwad}, g \in \dom{A^\eps}$, and let $\mc A_\eps $ be the generator of the semigroup $\left (\mc T_\eps (t)\right )_{t\ge 0}$. For a simple tensor $u= f\otimes g \in \mc D$   
\[ \mc A_\eps u = (\ddwad f)\otimes g + f \otimes (A^\eps g)= (\partial_x^2 + \partial_y^2 +  \eps^{-2} \partial_z^2 ) u \]
(see \cite{nagel}*{p. 23}). 
Since it is clear that such a $u$ belongs also to $\dom{\naeps}$, the operators $\naeps$ and $\mc A_\eps $ have the common subdomain $\mc D$ where they coincide.  Next, for any $\lam >0$, the range of $(\lam - \mc A_\eps)_{|\mc D}$ is dense in $C(\Omega)$, because $\mc D$ is a core for $\mc A_\eps$ (see \cite{ethier}*{Proposition 3.1, p. 17}). Therefore, also the range of $\lam - \naeps$ is dense in $C(\Omega)$, since it contains $(\lam - \naeps)_{|\mc D}=(\lam - \mc A_\eps)_{|\mc D}$. Thus, the operator $\naeps $, being clearly densely defined, is closable and generates a conservative Feller generator by Theorem 2.2. in \cite{ethier}*{p. 165}. Also, by the other implication in \cite{ethier}*{Proposition 3.1, p. 17} just alluded to, $\mc D$ is a core for $\naeps$. Hence, $\mc D$ being a common core for $\mc A_\eps $ and $\onaeps$, these two generators must coincide. 
 \end{proof}
 
The subspace $\notzee$ of Section \ref{tmrr} may be considered as an injective tensor product, too. Namely,
\[ \notzee = C(\be ) \tilde \otimes_\epsilon C(\{0-\}\cup \{0+\}). \]
where $C(\{0-\}\cup \{0+\})$, the space of continuous functions on the set $\{0-\}\cup \{0+\}$ with discrete topology may be identified with $\R^2$ with the maximum norm. Therefore, one may think of the injective tensor product semigroup $\left (\mc S (t)\right )_{t\ge 0}$, where 
\[ \mc S(t) := \e^{t\ddwadd} \tilde \otimes_\epsilon \e^{tB} \]
where $B$ was defined in Theorem \ref{limit}.  

\begin{prop} The operator $\mathfrak B$ of Section \ref{tmrr} is the generator of the injective product semigroup  $\left (\mc S (t)\right )_{t\ge 0}$.
\end{prop}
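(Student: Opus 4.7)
The plan is to identify the generator of the injective product semigroup via the general description used in the proof of Proposition \ref{tsgb:prop:1}, and then match it with $\mathfrak B$. Writing $\mathfrak A$ for the generator of $\left(\mc S (t)\right)_{t\ge 0}$, I would first identify $\notzee$ with the injective tensor product $C(\be)\tilde\otimes_\epsilon\R^2$, where $\R^2\cong C(\{0-\}\cup\{0+\})$ carries the maximum norm. Under this identification, a simple tensor $f\otimes g$ with $g=(g_1,g_2)\in\R^2$ corresponds to the pair $(g_1 f, g_2 f)\in C(\be)\times C(\be)$, and expansion in the canonical basis of $\R^2$ shows that the algebraic tensor product $\dom{\ddwadd}\otimes\R^2$ is naturally isomorphic to $\dom{\ddwadd}\times\dom{\ddwadd}=\dom{\mathfrak B}$.

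Next I would invoke the description of the generator already used in the proof of Proposition \ref{tsgb:prop:1}: the linear combinations of simple tensors $f\otimes g$ with $f\in\dom{\ddwadd}$ and $g\in\dom B$ form a core for $\mathfrak A$, and on such a tensor $\mathfrak A(f\otimes g)=(\ddwadd f)\otimes g + f\otimes (Bg)$. Because $B$ is the bounded matrix of \eqref{macierz} acting on $E_0\simeq\R^2$, its domain is all of $\R^2$ and no further restriction is imposed. A routine check using the identifications above shows that $(\ddwadd f)\otimes g + f\otimes (Bg)$ corresponds to the pair $\bigl(\ddwadd(g_1 f)+\alpha(g_2-g_1)f,\ \ddwadd(g_2 f)+\beta(g_1-g_2)f\bigr)$, which is exactly $\mathfrak B(g_1 f, g_2 f)$. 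By linearity $\mathfrak A$ and $\mathfrak B$ therefore agree on $\dom{\ddwadd}\times\dom{\ddwadd}$.

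To finish, $\mathfrak B$ is already known from Section \ref{tmrr}, via Phillips's theorem applied to the diagonal Feller generator perturbed by the bounded matrix of \eqref{macierz}, to be a generator on the full domain $\dom{\ddwadd}\times\dom{\ddwadd}$, and hence closed. Since this domain is a core for $\mathfrak A$ on which $\mathfrak A$ restricted to $\dom{\mathfrak B}$ equals $\mathfrak B$, both operators coincide with the closure of their common restriction and so $\mathfrak A=\mathfrak B$. The one step requiring any real attention is the bookkeeping that turns the tensor-product action $(\ddwadd f)\otimes g+f\otimes(Bg)$ into the matrix action on pairs; once the isomorphism $\dom{\ddwadd}\otimes\R^2\cong\dom{\ddwadd}\times\dom{\ddwadd}$ is spelled out, the core argument makes characterizing $\dom{\mathfrak A}$ explicitly unnecessary.
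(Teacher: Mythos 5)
Your proposal is correct and follows essentially the same route as the paper: both identify $\notzee$ with $C(\be)\tilde\otimes_\epsilon\R^2$, use the Nagel description of the tensor-product generator on simple tensors $f\otimes g$ with $f\in\dom{\overline{\Delta_{2D}}}$, $g\in\R^2$, and verify that its action there coincides with $\mathfrak B$. The only (immaterial) difference is the closing step: you invoke the core property of the span of such tensors together with closedness of $\mathfrak B$, whereas the paper observes that the tensor-product generator extends $\mathfrak B$ and that one generator cannot properly extend another.
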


\begin{proof} It will be convenient to identify elements $g\in C(\{0-\}\cup \{0+\})$ with pairs of real numbers written as $\binom{g^-}{g^+}$. With this identification, a member $u$ of $\notzee$ has the form
\begin{equation}\label{propek:1} u= u^- \otimes \binom 10 + u^+ \otimes \binom 01 , \end{equation}
where $u^-$ and $u^+$ are defined in \eqref{tmrr:1}. 

Let, for the sake of this proof, $\mc B$ be the generator of $\left (\mc S (t)\right )_{t\ge 0}$. If $u$ is a member of $\dom{\mathfrak B}$, \ie if $u^-$ and $u^+$ belong to $\dom{\ddwadd}$ then  (by the already cited result from p. 23 in \cite{nagel}) \eqref{propek:1} shows that $u\in \dom{\mc B}$, and 
\begin{align*}
\mc Bu &= \ddwadd u^- \otimes \binom 10 + u^- \otimes B \binom 10
+ \ddwadd u^+ \otimes \binom 01 + u^+ \otimes B\binom 01\\
&= \ddwadd u^- \otimes \binom 10 + u^- \otimes \binom {-\alpha}\beta
+ \ddwadd u^+ \otimes \binom 01 + u^+ \otimes \binom {\alpha}{-\beta}\\
&= (\ddwadd u^- - \alpha u^- + \alpha u^+)\otimes \binom 10 +  (\ddwadd u^+ +\beta u^- - \beta u^+)\otimes \binom 01 \\&= \mathfrak Bu .
\end{align*}
It follows that $\mc B$ extends $\mathfrak B$. However, since both $\mc B$ and $\mathfrak B$ are generators, $\mc B$ cannot be a proper extension of $\mathfrak B$ and we conclude that $\mathfrak B= \mc B.$ \end{proof}

 \begin{proof}[Proof of Theorem \ref{mainth}]
 Since simple tensors form a linearly dense subset of $C(\Omega)$ it suffices to show \eqref{mainth:1} for $u=f\otimes g$ where $f\in C(\be)$ and $g \in C(U).$ By Theorem \ref{limit},
\[ \grae \e^{t\onaeps } (f\otimes g) = (\e^{t\ddwadd} f )\otimes (\grae \e^{tA^\eps}g ) =  (\e^{t\ddwadd} f)\otimes (\e^{tB} P_{p,q} g), \quad t >0.\]
 Since, as a direct calculation shows, $\mc P_{p,q} (f\otimes g) = f \otimes P_{p,q} g$, we have, on the other hand, 
 \[ \e^{t\mathfrak B} \mc P_{p,q} (f\otimes g) =(\e^{t\ddwadd} f)\otimes (\e^{tB} P_{p,q} g),\]
and this completes the proof. \end{proof}


\iftoggle{blind}{}{\textbf {Acknowledgment.}  This research is supported by National Science Center (Poland) grant
2017/25/B/ST1/01804.}


\bibliographystyle{plain}
\bibliography{../../../../bibliografia}
\end{document}